\newtheorem{thm}{Theorem}[section]
\newtheorem{lem}[thm]{Lemma}
\newtheorem{prop}[thm]{Proposition}
\newtheorem{cor}[thm]{Corollary}
\newtheorem{definition}{Definition}
\newtheorem{prob}{Problem}
\newcommand*\pFqskip{8mu}
\newcommand*\pFq{\begingroup
        \catcode`\,\active
        \def ,{\mskip\pFqskip\relax}%
        \dopFq
}
\def\dopFq#1#2#3#4#5{%
        {}_{#1}F_{#2}\biggl[\genfrac..{0pt}{}{#3}{#4};#5\biggr]%
        \endgroup
}
\begin{document}

\begin{center}
{\large \bf  Avoiding vincular patterns on alternating words}
\end{center}

\begin{center}
Alice L.L. Gao$^{1}$,
Sergey Kitaev$^{2}$, and Philip B. Zhang$^{3}$\\[6pt]

$^{1}$Center for Combinatorics, LPMC-TJKLC \\
Nankai University, Tianjin 300071, P. R. China\\[6pt]

$^{2}$ Department of Computer and Information Sciences \\
University of Strathclyde, 26 Richmond Street, Glasgow G1 1XH, UK\\[6pt]

$^{3}$ College of Mathematical Science \\
Tianjin Normal University, Tianjin  300387, P. R. China\\[6pt]

Email: $^{1}${\tt gaolulublue@mail.nankai.edu.cn},
	   $^{2}${\tt sergey.kitaev@cis.strath.ac.uk},
           $^{3}${\tt zhangbiaonk@163.com}
\end{center}

\noindent\textbf{Abstract.}
A word $w=w_1w_2\cdots w_n$ is alternating if either $w_1<w_2>w_3<w_4>\cdots$ (when the word is up-down) or $w_1>w_2<w_3>w_4<\cdots$ (when the word is down-up). The study of alternating words avoiding classical permutation patterns was initiated by the authors in~\cite{GKZ}, where, in particular, it was shown that 123-avoiding up-down words of even length are counted by the Narayana numbers.

However, not much was understood on the structure of 123-avoiding up-down words. In this paper, we fill in this gap by introducing the notion of a cut-pair that allows us to subdivide the set of words in question into equivalence classes. We provide a combinatorial argument to show that the number of equivalence classes is given by the Catalan numbers, which induces an alternative (combinatorial) proof of the corresponding result in~\cite{GKZ}.

Further, we extend the enumerative results in~\cite{GKZ} to the case of alternating words avoiding a vincular pattern of length 3. We show that it is sufficient to enumerate up-down words of even length avoiding the consecutive pattern $\underline{132}$ and up-down words of odd length avoiding the consecutive pattern $\underline{312}$ to answer all of our enumerative questions. The former of the two key cases is enumerated by the Stirling numbers of the second kind.

\noindent {\bf Keywords:}  alternating word, up-down word, pattern-avoidance, Narayana number, Catalan number, Stirling number of the second kind, Dyck path

\noindent {\bf AMS Subject Classifications:}  05A05, 05A15

\section{Introduction}\label{intro}
A permutation $\pi=\pi_1\pi_2\cdots \pi_n$ is called {\em up-down} if $\pi_1<\pi_2>\pi_3<\pi_4>\pi_5<\cdots$.  A permutation $\pi=\pi_1\pi_2\cdots \pi_n$ is called {\em down-up} if $\pi_1>\pi_2<\pi_3>\pi_4<\pi_5>\cdots$. A famous result of Andr\'{e} states that if $E_n$ is the number of up-down (equivalently, down-up) permutations of $1,2,\ldots,n$, then $$\sum_{n\geq 0}E_n\frac{x^n}{n!}=\sec x+\tan x.$$ Some aspects of up-down and down-up permutations\footnote{Up-down and down-up permutations are also called in the literature reverse alternating and alternating permutations, respectively.} are surveyed in~\cite{Stanley2010survey}. Slightly abusing these definitions, we refer to {\em alternating permutations} as the union of up-down and down-up permutations\footnote{The union of up-down and down-up permutations is also known in the literature as the set of zigzag permutations.}. The study of alternating permutations was extended to other types of alternating sequences, for example, to up-down multi-permutations~\cite{Carlitz}. For other relevant sources see~\cite{CarlitzScoville} and~\cite{Carlitz1}.

In \cite{GKZ} we extended the study of alternating permutations to that of {\em alternating words}. These words, also called {\em zigzag words}, are the union of up-down and down-up words, which are defined in a similar way to the definition of up-down and down-up permutations, respectively. Namely, a word $w=w_1w_2\cdots w_n$ is up-down (resp., down-up) if $w_1<w_2>w_3<\cdots$ (resp., $w_1>w_2<w_3>\cdots$)\footnote{We note that there are other ways to extend the notion of alternating permutations to words. For example, one can replace ``$>$'' and ``$<$'' by ``$\geq$'' and ``$\leq$'', respectively, in the definition of alternating words to define what we call weak alternating words. Weak alternating words are not in the scope of this paper.}. For example, $1214$, $2413$, $2424$ and $3434$ are examples of up-down words of length 4 over the alphabet $\{1,2,3,4\}$. In this paper, we write the entries of an up-down word $w$ as $w=b_1 t_1 b_2 t_2 \cdots $ where $b_i<t_i>b_{i+1}$ for $i\geq 1$. We call a letter $b_i$ a {\em bottom element} and $t_i$ a {\em top element}.

For a word $w=w_1w_2\cdots w_n$ over the alphabet $\{1,2,\ldots,k\}$, its complement $w^c$ is the word $c_1c_2\cdots c_n$, where for each $i=1,2,\ldots,n$, $c_i=k+1-w_i$. For example, the complement of the word $24265$ over the alphabet $\{1,2,\ldots,6\}$ is $53512$. For a word $w=w_1w_2\cdots w_n$, its {\em reverse} $w^r$ is the word $w_nw_{n-1}\cdots w_1$. For example, if $w=53512$ then $w^r=21535$.

We say that a permutation $\pi=\pi_1\pi_2\cdots\pi_n$ {\em contains an occurrence} of a {\em pattern} $\tau=\tau_1\tau_2\cdots\tau_k$ if there are $1\leq i_1< i_2<\cdots< i_k\leq n$ such that $\pi_{i_1}\pi_{i_2}\cdots \pi_{i_k}$ is order-isomorphic to $\tau$. If $\pi$ does not contain an occurrence of $\tau$, we say that $\pi$ {\em avoids}~$\tau$. For example, the permutation 315267 contains several occurrences of the pattern 123, such as, the subsequences 356 and 157, while this permutation avoids the pattern 321. Such patterns are referred to as ``classical patterns'' in the theory of patterns in permutations and words (see \cite{Kitaev2011Patterns} for a comprehensive introduction to the theory). Occurrences of a pattern in words are defined similarly as subsequences order-isomorphic to a given word called pattern. The only difference between word and permutation patterns is that word patterns can contain repetitive letters, which is not in the scope of this paper.

Another type of patterns of interest to us is {\em vincular patterns}, also known as {\em generalized patterns} \cite{BS}, in occurrences of which some of the letters may be required to be adjacent in a permutation or a word. We underline letters of a given pattern to indicate the letters that must be adjacent in any occurrence of the pattern. For example, the word $w=1244254$ contains four occurrences of the pattern $1\underline{32}$, namely, the subsequences 142, 154, and 254 twice: in each of these occurrences, the letters in $w$ corresponding to 2 and 3 in the pattern stay next to each other. On the other hand, $w$ contains just one occurrence of the pattern  $\underline{132}$ formed by the rightmost three letters in $w$. If all letters in an occurrence of a pattern are required to stay next to each other, which is indicated by underlying all letters in the pattern, such patterns are called {\em consecutive patterns}. Vincular patterns play an important role in the theory of patterns in permutations and words (see Sections 3.3 and 3.4 in \cite{Kitaev2011Patterns} for details).

In this paper, $[k]=\{1,2,\ldots,k\}$, $S^{p}_{k,n}$ denotes the set of  $p$-avoiding up-down words of length $n$ over $[k]$, and $N^{p}_{k,n}$ denotes the number of words in $S^{p}_{k,n}$.  Two patterns, $p_1$ and $p_2$, are {\em Wilf-equivalent} if $N^{p_1}_{k,n}=N^{p_2}_{k,n}$ for $n\geq 0$ and $k\geq 1$. Also, for a word $w$, $\{w\}^+$ denotes a word in $\{w,ww,www,\ldots\}$ and $\{w\}^*$ denotes a word in $\{w\}^+\cup \{\epsilon\}$, where $\epsilon$ is the empty word. Moreover, recall that the $n$-th {\em Catalan number} is $C_n=\frac{1}{n+1}{2n \choose n}$ and the {\em Narayana number} $N_{n,m}$ is $\frac{1}{m+1}{n\choose m}{n-1\choose m}$. Also, a Dyck path of semi-length $n$ is a lattice path with steps $(1,1)$ and $(1,-1)$ which begins at $(0,0)$, ends at $(2n,0)$, and never goes below the $x$-axis.

The content of this paper is as follows. In Section~\ref{struct-sec} we not only discuss in more detail the structure of 123-avoiding up-down words of even length, but also give an alternative, combinatorial way to show that the number of these words is given by the Narayana numbers. Originally, this fact was established in~\cite{GKZ}. An essential part of our studies here is the notion of a  {\em cut-pair}, which allows us to subdivide the set of words in question into equivalence classes. We prove that the number of equivalence classes is counted by the {\em Catalan numbers}, which is done by establishing a bijection between the classes and {\em Dyck paths} of certain length.

Further, in Sections~\ref{three-consecutive-up-down-sec} and \ref{enum-vinc-pat} we extend the enumerative results in~\cite{GKZ} to the case of alternating words avoiding a vincular pattern of length 3. This direction of research is also an extension of vincular pattern-avoidance results on all words to alternating words; see \cite[Section 7.2]{Kitaev2011Patterns} for a survey of the respective results.

\begin{table}[htbp]
\begin{center}
\begin{tabular}{|c|cccccc|cccccc|}
\hline
&$\underline{123}$&$\underline{132}$&$\underline{213}$
&$\underline{231}$&$\underline{312}$&$\underline{321}$\\
\hline
even length & $K$&$A$&$A$&$C$&$C$&$K$\\
\hline
odd length & $L$&$B$&$D$&$B$&$D$&$L$\\
\hline
\end{tabular}
\end{center}

\begin{center}
\begin{tabular}{|c|cccccc|cccccc|}
\hline
&$1\underline{23}$&$1\underline{32}$&$2\underline{13}$
&$2\underline{31}$&$3\underline{12}$&$3\underline{21}$\\
\hline
even length & $A$&$A$&$N$&$N$&$C$&$E$\\
\hline
odd length & $F$&$B$&$H$&$G$&$D$&$D$\\
\hline
\end{tabular}
\end{center}

\begin{center}
\begin{tabular}{|c|cccccc|c|}
\hline
&$\underline{12}3$&$\underline{13}2$&$\underline{21}3$
&$\underline{23}1$&$\underline{31}2$&$\underline{32}1$\\
\hline
even length &$A$&$N$&$A$&$C$&$N$&$E$\\
\hline
odd length &$D$&$G$&$D$&$B$&$H$&$F$\\
\hline
\end{tabular}
\end{center}
\caption{Wilf-equivalence for the enumerative results in this paper. The results encoded by A--L are given by Theorems~\ref{trivial-case}, \ref{thm 1.4},  \ref{thm 1.6}, \ref{thm 1.10}, \ref{thm 1.11}, \ref{thm 2.6}, \ref{thm 2.8}, \ref{thm 2.9} and Corollary \ref{cor 2.5}.}
\label{overview-enum-results}
\end{table}

Table~\ref{overview-enum-results} shows Wilf-equivalent classes, where $A$ is given by Theorem~\ref{thm 1.4}, $B$ by Theorem~\ref{thm 1.6}, $C$ by Theorem~\ref{thm 1.11},  and $D$ by Theorem~\ref{thm 1.10}. Also, $G,$ $H,$ $N$ are given by Corollary~\ref{cor 2.5} and Theorem~\ref{thm 2.6}, and  $E$ and $F$ by Theorems~\ref{thm 2.8} and~\ref{thm 2.9}. Finally, we do not give separate enumeration for $K$ and $L$, but treat these cases together in Theorem~\ref{trivial-case} by providing a recurrence relation for these numbers. In particular, we show that it is sufficient to enumerate up-down words of even length avoiding the consecutive pattern \underline{132} (corresponding to $A$ in Table~\ref{overview-enum-results}) and up-down words of odd length avoiding the consecutive pattern \underline{312} (corresponding to $D$ in Table~\ref{overview-enum-results}) to deduce all of our enumerative results. Note that $A$ in Table~\ref{overview-enum-results} is given by the {\em Stirling numbers of the second kind} $S(n, m)$ counting the number of ways to partition a set of $n$ elements into $m$ nonempty subsets.

All our results in this paper are for up-down pattern-avoiding words. However, they can be easily turned into results on down-up pattern-avoiding words by using the complement operation.

\section{Structure of 123-avoiding up-down words of even length}\label{struct-sec}

Recall that 123-avoiding up-down words were enumerated in~\cite{GKZ}.   To be more precise, the following theorem was proved in~\cite{GKZ}.

\begin{thm}[\cite{GKZ}]\label{thm-from-prev-paper}
For $p\in \{123, 132, 312, 213, 231 \}$ and $i\geq 1$, $$N^{p}_{k,2i}=N_{k+i-1,i},$$ where $N_{k,j}$, for $0\leq j\leq k-1$, is the Narayana number $\frac{1}{j+1}\binom{k}{j}\binom{k-1}{j}$.
\end{thm}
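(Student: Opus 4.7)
The plan is to reduce the five-pattern statement to a single representative and then enumerate the avoiders of that representative explicitly. For the reduction, I would start from the observation that the reverse-complement operation sends an up-down word of even length to an up-down word of the same length, and sends any pattern $p$ of length $3$ to its reverse-complement $p^{rc}$. Since $(132)^{rc}=213$ and $(231)^{rc}=312$, this immediately gives $N^{132}_{k,2i}=N^{213}_{k,2i}$ and $N^{231}_{k,2i}=N^{312}_{k,2i}$, leaving only the three orbit representatives $\{123\}$, $\{132\}$, $\{231\}$ to be linked. These remaining Wilf-equivalences I would attack via direct bijections at the letter level, swapping or relocating top/bottom entries so as to convert an occurrence of one forbidden pattern into a harmless configuration while preserving both the up-down property and the multiset of letters.

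For the core enumeration, I would focus on the pattern $123$, whose avoiders admit a particularly clean structural description. Writing $w=b_1 t_1 b_2 t_2\cdots b_i t_i$, a careful case analysis of which three positions can form an increasing triple, exploiting the automatic inequalities $b_j<t_j$ and $b_{j+1}<t_j$, shows that the only truly constraining triples are $(b_a,b_b,t_b)$ and $(b_b,t_b,t_c)$ with $a<b<c$. Avoidance of $123$ via these two families forces, respectively, $b_a\geq b_b$ and $t_b\geq t_c$. Conversely, assuming both $(t_j)$ and $(b_j)$ are weakly decreasing, one checks case-by-case that every remaining configuration of three positions in $w$ is ruled out as well. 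Hence the $123$-avoiding up-down words of length $2i$ over $[k]$ correspond exactly to pairs of weakly decreasing sequences $(b_j)_{j=1}^i$ and $(t_j)_{j=1}^i$ in $[k]$ satisfying $b_j<t_j$ for each $j$.

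The substitution $c_j=t_j-1$ turns the counting problem into one of pairs of weakly decreasing sequences $(b_j),(c_j)$ in $[k-1]^i$ with $b_j\leq c_j$ pointwise, a standard non-intersecting lattice-path configuration. I would close either by applying the Lindstr\"om-Gessel-Viennot lemma and simplifying the resulting $2\times 2$ determinant, or by constructing a direct bijection with Dyck paths of semi-length $k+i-1$ having the appropriate number of peaks, which is a familiar combinatorial model for the Narayana number $N_{k+i-1,i}$.

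The step I expect to be the main obstacle is completing the three Wilf-equivalences among the orbits $\{123\}$, $\{132,213\}$, and $\{231,312\}$, since no elementary symmetry of up-down words maps one orbit to another: the bijections must be designed one pair at a time and their interaction with the up-down condition checked by hand. By comparison, once the characterization in terms of two weakly decreasing sequences is in place, the enumeration step reduces to a classical identity.
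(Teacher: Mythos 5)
Your proposal has a genuine gap at the step you yourself flag as the main obstacle: the Wilf-equivalences linking the three reverse-complement orbits $\{123\}$, $\{132,213\}$, $\{231,312\}$. The reduction via reverse-complement is fine (it does preserve even-length up-down words and sends $132\mapsto 213$, $231\mapsto 312$, $123\mapsto 123$), but after that you only promise bijections that ``swap or relocate top/bottom entries'' without constructing them, and it is exactly these cross-orbit equivalences that carry the content of the five-pattern statement. No elementary symmetry does this job, and the needed bijections are not routine; as stated, the theorem is therefore not proved for $p\in\{132,213,231,312\}$. A cleaner way to close the gap, and in effect what one is forced to do anyway, is to drop the equivalence language and enumerate one representative of each orbit separately (each has its own structural characterization analogous to Lemma~\ref{lem-decreasing}), showing each count equals $N_{k+i-1,i}$ directly.

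For the $123$ case your route is sound and genuinely different from the one taken here. You arrive at the same characterization as Lemma~\ref{lem-decreasing} (both the bottom and the top sequences weakly decreasing, with $b_j<t_j$ forced pointwise and $b_{j+1}<t_j$ then automatic), but then pass to pairs of dominating weakly decreasing sequences and invoke Lindstr\"om--Gessel--Viennot, whereas the paper decomposes $S^{123}_{k,2i}$ into cut-equivalence classes, puts the classes in bijection with Dyck paths of semi-length $k-2$ refined by valleys (Theorem~\ref{class_dyck}, Corollary~\ref{lem:cut}), counts the words inside each class by compositions, and closes with the Saalsch\"utz identity. Your LGV determinant, once actually evaluated, gives the Narayana number more directly and with less machinery; the paper's longer route buys the structural by-product (the Catalan-many cut-equivalence classes) that is one of its stated goals. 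But note that in your write-up the determinant evaluation and the ``direct bijection with Dyck paths having the appropriate number of peaks'' are both left as declarations of intent, so even the $123$ case is a sketch rather than a proof; of the two unfinished steps, only this one is genuinely routine.
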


In this section, we give more details on the structure of 123-avoiding up-down words, and provide an alternative, combinatorial proof for their enumeration.

\subsection{Cut-pairs and cut-equivalence}

We begin with a description of the structure of 123-avoiding up-down words of even length.

\begin{lem}\label{lem-decreasing}
An up-down word $w=b_1 t_1 b_2 t_2 \cdots b_i t_i$ is $123$-avoiding if and only if
the following two conditions hold:
\begin{itemize}
\item[(a)] $b_1\geq b_2\geq\dots \geq b_i$,
\item[(b)] $t_1\geq t_2\geq\dots \geq t_i$.
\end{itemize}
\end{lem}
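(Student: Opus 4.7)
The plan is to prove both directions by contrapositive, with the forward direction amounting to exhibiting an explicit $123$-pattern and the reverse direction being a short case analysis exploiting the alternating structure $t_l > b_{l+1}$.

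For the forward direction (only-if), I would suppose one of (a), (b) fails and show $w$ contains $123$. If $b_j < b_{j+k}$ for some $j \geq 1$, $k \geq 1$, then $b_j < b_{j+k} < t_{j+k}$ (the last inequality is immediate from the definition of an up-down word), and these three letters appear in $w$ in that order, giving a $123$-pattern. If $t_j < t_{j+k}$ for some $j \geq 1$, $k \geq 1$, then $b_j < t_j < t_{j+k}$ gives a $123$-pattern. So if $w$ avoids $123$, both (a) and (b) hold.

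For the reverse direction (if), I would again argue contrapositively: assume $w$ contains a $123$-pattern at positions $p_1 < p_2 < p_3$ with values $w_{p_1} < w_{p_2} < w_{p_3}$, and derive that (a) or (b) must fail. Classify by the type (top/bottom) of $w_{p_2}$ and $w_{p_3}$. If $w_{p_3} = b_m$, then $w_{p_2}$ lies in the prefix $b_1 t_1 \cdots b_{m-1} t_{m-1}$; either $w_{p_2} = b_l$ with $l < m$, in which case (a) gives $b_l \geq b_m$, or $w_{p_2} = t_l$ with $l \leq m-1$, in which case $t_l > b_{l+1} \geq b_m$ by the up-down property plus (a); either contradicts $w_{p_2} < w_{p_3}$. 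If $w_{p_3} = t_m$, then the case $w_{p_2} = t_l$ with $l < m$ contradicts (b), so $w_{p_2} = b_l$ for some $l \leq m$, and the same dichotomy applied to $w_{p_1}$ against $w_{p_2} = b_l$ (using (a) and $t_j > b_{j+1}$) gives $w_{p_1} \geq w_{p_2}$, a contradiction. Hence no $123$-pattern exists.

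The proof is essentially routine case analysis, so there is no real obstacle; the only thing to keep straight is the bookkeeping of which of $w_{p_1}, w_{p_2}, w_{p_3}$ is a top versus a bottom element, and to consistently invoke the single key inequality $t_l > b_{l+1} \geq b_m$ (for $l+1 \leq m$) that lets one "jump over" tops using (a).
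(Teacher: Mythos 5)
Your proof is correct and follows essentially the same route as the paper: the forward direction exhibits the same two explicit occurrences ($b_j b_{j+k} t_{j+k}$ when (a) fails and $b_j t_j t_{j+k}$ when (b) fails), and the reverse direction is the same contrapositive. The only difference is that the paper finishes the reverse direction with a shorter pigeonhole count --- among the three letters of a $123$-occurrence at most one can sit in a bottom position by (a) and at most one in a top position by (b), which is impossible for three letters --- whereas you carry out an explicit case analysis using the auxiliary inequality $t_l > b_{l+1} \geq b_m$; both arguments are valid.
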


\begin{proof}
We first  show that if $w$ is a $123$-avoiding up-down word, then (a) and (b) hold.
(a) is true since if there exist $1\leq j_1<j_2\leq i$ such that $b_{j_1} < b_{j_2}$, then $b_{j_1}b_{j_2}t_{j_2}$ forms the pattern $123$.
Similarly, (b) is true since if there exist $1\leq j_1<j_2\leq i$ such that $t_{j_1} < t_{j_2}$, then $b_{j_1}t_{j_1}t_{j_2}$ forms the pattern $123$.

We next  prove that any up-down word $w$ satisfying (a) and (b) must be $123$-avoiding.
Suppose that  there is an occurrence $xyz$ of the pattern $123$ in $w$.
Then  at most one of the three letters $x$, $y$ and $z$ can stay in bottom positions, since otherwise it would  contradict  the condition (a).
Similarly, due to (b), at most one of the three letters  can stay in top positions.
This is impossible and thus $w$ is  $123$-avoiding, which completes the proof.
\end{proof}

\begin{definition}
Given a word $w=b_1 t_1 b_2 t_2 \cdots b_i t_i\in S^{123}_{k,2i}$, suppose that all pairs $b_jt_j$ ($1\leq j\leq i$) in $w$ are {\bf distinct}. Then $b_jt_j$ is a \emph{cut-pair} if
\begin{itemize}
 \item  $1 < b_j < k-1$ and $b_j>b_m$ for all $j+1\leq m\leq i$,
and
 \item  $2 < t_j < k$ and $t_j<t_m$ for $1\leq m\leq j-1$.
\end{itemize}
Furthermore, if $w$ {\bf contains repeated pairs} then $b_jt_j$ is a cut-pair if it is a cut-pair in the word obtained from $w$ by removing {\bf all} repetitions of repeated pairs.
\end{definition}

For example, given $k=6$, the word $w=4645252512 \in S^{123}_{6,10}$ has only one cut-pair $45$. On the other hand, 25 is a cut-pair in the word $252525\in S^{123}_{6,10}$. For yet another example, the word $262626\in S^{123}_{6,10}$ has no cut-pair. The word ``cut'' in ``cut-pair'' came in analogy with the notion of a {\em cut-point} in a permutation that can be used to define {\em reducible}/{\em irreducible permutations} \cite{Kitaev2011Patterns}. A cut-point in that context is a place in the permutation, where every element to the left of the place is smaller than every element to the right of it.

Combining the definition of cut-pairs with Lemma~\ref{lem-decreasing}, it is easy to see that if a $123$-avoiding up-down word $w=b_1 t_1 b_2 t_2 \cdots b_i t_i$ has cut-pairs
$b_{p_1}  t_{p_1}, b_{p_2}  t_{p_2}, \ldots,  b_{p_j}  t_{p_j}$, then we must have $k-1>b_{p_1} > b_{p_2} >\cdots > b_{p_j}>1$ and
$k>t_{p_1} > t_{p_2} >\cdots > t_{p_j}>2$.

\begin{definition}
Two words $w_1,w_2\in S^{123}_{k,2i}$ are \emph{cut-equivalent} if their sets of cut-pairs are the same.
\end{definition}

Clearly, ``to be cut-equivalent'' is an equivalence relation on  $S^{123}_{k,2i}$, and the corresponding equivalence classes are  uniquely characterized by  the cut-pairs. Let $\mathcal{F}^{123}_{k,2i}$ denote the set of cut-equivalence classes of $\mathcal{F}^{123}_{k,2i}$. For any cut-equivalence class $f$ in $\mathcal{F}^{123}_{k,2i}$, denote by $n(f)$ the number of cut-pairs each word in $f$ has (this number is the same for any word in $f$ by definition).

\begin{lem}\label{lem-cutpair}
The cut-equivalence class in $\mathcal{F}^{123}_{k,2i}$
with cut-pairs  $b_{p_1}  t_{p_1}, b_{p_2}  t_{p_2}, \ldots,  b_{p_j}  t_{p_j}$, where $p_1<p_2<\cdots< p_j$, consists of the words of length $2i$ that can be generated from the expression
\begin{align}
 & \{(k-1)k\}^{*} \{(k-2)k\}^{*} \cdots  \{b_{p_1} k\}^{*} \{b_{p_1}(k-1)\}^{*} \cdots  \{b_{p_1} t_{p_1}\}^{+} \{(b_{p_1}-1)  t_{p_1}\}^{*} \cdots  \notag \\[5pt]
  & \{b_{p_2} t_{p_1}\}^{*} \{b_{p_2} (t_{p_1}-1)\}^{*}    \cdots  \{b_{p_j}  t_{p_j}\}^{+} \cdots  \{1 t_{p_j}\}^{*} \cdots \{12\}^{*},
 \label{form_2}
\end{align}
where the second line is continuation of the first one.  Moreover, two different expressions of the form {\em (\ref{form_2})} cannot generate the same word.
\end{lem}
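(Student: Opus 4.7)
The plan is to prove the lemma in three movements: soundness (every word produced by (\ref{form_2}) has exactly the stated cut-pairs), completeness (every word in the class is produced by (\ref{form_2})), and uniqueness of the generation. Throughout, let $P$ denote the ``L-shaped'' sequence of pairs visited exactly once by (\ref{form_2}), namely $(k-1,k),(k-2,k),\ldots,(b_{p_1},k),(b_{p_1},k-1),\ldots,(b_{p_1},t_{p_1}),(b_{p_1}-1,t_{p_1}),\ldots,(b_{p_j},t_{p_j}),\ldots,(1,t_{p_j}),(1,t_{p_j}-1),\ldots,(1,2)$, in which each pair appears only once.

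For soundness, the pair sequence of any word generated by (\ref{form_2}) (allowing repetitions) forms a subsequence of $P$; since $P$ is coordinate-wise weakly decreasing, so is this sequence, and by Lemma~\ref{lem-decreasing} the generated word is up-down and $123$-avoiding. Each designated cut-pair $(b_{p_\ell},t_{p_\ell})$ appears at least once (because of the $^+$); its immediate predecessor in $P$ has strictly larger top and its immediate successor strictly smaller bottom, while the bounds $1<b_{p_\ell}<k-1$ and $2<t_{p_\ell}<k$ hold by definition, so all the cut-pair inequalities are satisfied. Any other pair on $P$ either lies on a boundary ($b\in\{1,k-1\}$ or $t\in\{2,k\}$) or shares a coordinate with its $P$-neighbour, and hence fails one of the defining inequalities of a cut-pair.

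For completeness, fix $w\in S^{123}_{k,2i}$ with the prescribed cut-pairs and let $Q$ be its distinct-pair sequence; Lemma~\ref{lem-decreasing} makes both the bottom and top sequences of $w$ weakly decreasing, so $Q$ is coordinate-wise weakly decreasing. The heart of the argument is to show $Q$ is a subsequence of $P$. Consider an intermediate pair $(b,t)$ in $Q$ strictly between two consecutive cut-pairs $(b_{p_\ell},t_{p_\ell})$ and $(b_{p_{\ell+1}},t_{p_{\ell+1}})$; monotonicity combined with the cut-pair bounds force $1<b<k-1$ and $2<t<k$. Since $(b,t)$ is not a cut-pair, one of its defining inequalities must fail; given monotonicity the only available failure modes are that the next distinct pair of $Q$ shares bottom $b$, or that the previous one shares top $t$. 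Let $(b^\star,t^\star)$ be the first intermediate in this block with $t^\star<t_{p_\ell}$: its predecessor has top strictly greater than $t^\star$, ruling out the top-defect, so its successor must share bottom $b^\star$. Iterating this observation shows that every subsequent distinct pair, together with the cut-pair $(b_{p_{\ell+1}},t_{p_{\ell+1}})$ itself, carries bottom $b^\star$, forcing $b^\star=b_{p_{\ell+1}}$. All intermediates preceding $(b^\star,t^\star)$ have top $t_{p_\ell}$ by the choice of $(b^\star,t^\star)$, so the intermediates lie on the horizontal and vertical legs of the corresponding piece of $P$. Analogous reasoning, with the ``corners'' $(k-1,k)$ and $(1,2)$ playing the role of virtual endpoint bounds, handles the prefix before the first cut-pair and the suffix after the last; here the up-down constraint $b<t$ collapses $t=2$ to $b=1$ and $b=k-1$ to $t=k$, preventing stray boundary pairs from escaping $P$. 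Having established $Q$ as a subsequence of $P$, the multiplicity of each pair of $P$ in $w$ determines the exponent of the corresponding block in (\ref{form_2}), with cut-pair blocks necessarily receiving positive exponents because cut-pairs appear in $w$.

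For uniqueness, each block of (\ref{form_2}) is labelled by a distinct pair of $P$, so the exponents are recovered unambiguously from the multiplicities of pairs in the word. The main obstacle is the propagation step in the completeness argument: one must carefully separate the two possible cut-pair defects for each intermediate and show that once the top of an intermediate falls below $t_{p_\ell}$, the bottom is pinned to $b_{p_{\ell+1}}$ throughout the remainder of the block, with symmetric behaviour at the two ends of the word.
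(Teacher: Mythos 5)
Your proof is correct and follows essentially the same route as the paper's: both rest on the monotonicity from Lemma~\ref{lem-decreasing} together with the dichotomy that a non-cut-pair lying strictly between consecutive cut-pairs must share its bottom with a later pair or its top with an earlier one, which forces every distinct pair of the word onto the L-shaped staircase underlying (\ref{form_2}). The only difference is organizational --- you propagate forward from the first pair whose top drops below $t_{p_\ell}$, whereas the paper argues by contradiction via an extremal intermediate pair that would itself have to be a cut-pair --- and this does not change the substance.
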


\begin{proof}
All the words of length $2i$ that can be generated from the expression \eqref{form_2} are clearly 123-avoiding. Moreover, by the definition of cut-pairs, these words have cut-pairs $b_{p_1}  t_{p_1}, b_{p_2}  t_{p_2}, \ldots,  b_{p_j}  t_{p_j}$.

Conversely, we shall show that any 123-avoiding word $w$ with cut-pairs  $b_{p_1}  t_{p_1}, b_{p_2}  t_{p_2}, \ldots,$  $b_{p_j}  t_{p_j}$ can be generated from \eqref{form_2}. Without loss of generality, suppose that all the pairs $b_jt_j$ in $w$ are distinct. We first prove the following claim.

\noindent \textbf{Claim:} If there is a pair $b_xt_x$ between $b_{p_m}  t_{p_m}$ and $b_{p_{m+1}}  t_{p_{m+1}}$ ($1\leq m\leq j-1$), then $b_{p_m}>  b_x>b_{p_{m+1}}$ and $  t_{p_m}>t_x >t_{p_{m+1}}$
cannot be satisfied at the same time.\\[0.1pt]

\noindent \textit{Proof of the Claim.} If some pair $b_xt_x$ between $b_{p_m}  t_{p_m}$ and $b_{p_{m+1}}  t_{p_{m+1}}$ has the property $b_{p_m}>  b_x>b_{p_{m+1}}$ and $  t_{p_m}>t_x >t_{p_{m+1}}$, then there are two cases to consider.

\noindent {\bf Case 1:} There exists at least one pair $b_yt_y$ between $b_xt_x$ and
$b_{p_{m+1}}  t_{p_{m+1}}$ in $w$ such that $b_y=b_x$. We list all such pairs as $b_{y_1} t_{y_1},b_{y_2} t_{y_2},\ldots,b_{y_s} t_{y_s}$,
where $b_x=b_{y_1}=b_{y_2}=\cdots=b_{y_s}$ and $t_x>t_{y_1}> t_{y_2}>\cdots>t_{y_s}$. But then $b_{y_s} t_{y_s}$ must be a cut-pair by the definition, which contradicts the assumption that there is no cut-pair between $b_{p_m}  t_{p_m}$ and $b_{p_{m+1}}  t_{p_{m+1}}$.

\noindent {\bf Case 2:} There exist no pair $b_yt_y$ between $b_xt_x$ and
$b_{p_{m+1}}  t_{p_{m+1}}$ such that $b_y=b_x$. Then there must exist at least one pair $b_zt_z$ such that $t_z=t_x$ between $b_{p_m}  t_{p_m}$ and
$b_x  t_x$ in $w$, since otherwise $b_xt_x$ would be a cut-pair.
We list all such pairs as $b_{z_1} t_{z_1},b_{z_2} t_{z_2},\ldots,b_{z_s} t_{z_s}$, where $b_{z_1}>b_{z_2}>\cdots>b_{z_s}>b_x$ and $t_{z_1}= t_{z_2}=\cdots=t_{z_s}=t_x$. But then $b_{z_1}t_{z_1}$ must be a cut-pair, which also contradicts the assumption that there is no cut-pair between $b_{p_m}  t_{p_m}$ and $b_{p_{m+1}}  t_{p_{m+1}}$.

This completes the proof of the claim.\\

We next show that the subword of $w$ starting at $b_{p_m}  t_{p_m}$ and ending at $b_{p_{m+1}}  t_{p_{m+1}}$ ($1\leq m\leq j-1$) belongs to the set of words
generated from the expression
\begin{align}\label{form2}
\{b_{p_m} t_{p_m}\}^{+} \{(b_{p_m}-1)  t_{p_m}\}^{*} \cdots
\{b_{p_{m+1}} t_{p_m}\}^{*} \{b_{p_{m+1}} (t_{p_m}-1)\}^{*}\cdots\{b_{p_{m+1}}  t_{p_{m+1}}\}^{+}.
\end{align}
Indeed, if a pair $b_xt_x$ is between $b_{p_m}  t_{p_m}$ and $b_{p_{m+1}}  t_{p_{m+1}}$ in $w$, then combining the definition of a cut-pair with Lemma~\ref{lem-decreasing}, there is
$b_{p_m}>b_x\geq b_{p_{m+1}} \mathrm{~and~}t_{p_m}\geq t_x> t_{p_{m+1}}$. Together with the claim above, we have either
$b_{p_m}>b_x> b_{p_{m+1}}$, $t_x=t_{p_m}$,
or $b_x=b_{p_{m+1}}$, $t_{p_m}\geq t_x>t_{p_{m+1}}$. Thus, for any two distinct pairs $b_xt_x$ and $b_yt_y$ between $b_{p_m}  t_{p_m}$ and $b_{p_{m+1}}  t_{p_{m+1}}$ in $w$, there are three cases to consider. \\

\noindent {\bf Case 1:} $b_{p_m}>b_x> b_{p_{m+1}}$, $t_x=t_{p_m}$, and $b_{p_m}>b_y> b_{p_{m+1}}$, $t_y=t_{p_m}$. Then $b_xt_x$ is to the left of $b_yt_y$ in $w$ if $b_x>b_y$ and to the right of it otherwise. \\

\noindent {\bf Case 2:} $b_{p_m}>b_x> b_{p_{m+1}}$, $t_x=t_{p_m}$, and $b_y=b_{p_{m+1}}$,
$t_{p_m}\geq t_y>t_{p_{m+1}}$. Then $b_xt_x$ is to the left of $b_yt_y$ in $w$. \\

\noindent {\bf Case 3:} $b_x=b_{p_{m+1}}$, $t_{p_m}\geq t_x>t_{p_{m+1}}$, and $b_y=b_{p_{m+1}}$,
$t_{p_m}\geq t_y>t_{p_{m+1}}$. Then $b_xt_x$ is to the left of $b_yt_y$ in $w$ if $t_x>t_y$ and to the right of it otherwise. \\

\noindent Hence, it follows that the subword of $w$ between $b_{p_m}  t_{p_m}$ and $b_{p_{m+1}}   t_{p_{m+1}}$ ($1\leq m\leq j-1$) can be generated by an expression of the form \eqref{form2}.

Similarly, if there is a pair $b_xt_x$ to the left of $b_{p_1}  t_{p_1}$ in $w$, then $k> b_x>b_{p_1}$ and $  k>t_x >t_{p_1}$
can not happen at the same time.
That is to say, there is
$k>b_x> b_{p_1}$ and $t_x=k$, or $b_x=b_{p_1}$ and $k\geq t_x>t_{p_1}$.
The subword of $w$ to the left of $b_{p_1}  t_{p_1}$  belongs to the set of words
that can be generated by the expression
\begin{align*}
\{(k-1)k\}^{*}\cdots\{b_{p_1} k\}^{*} \{b_{p_1}(k-1)\}^{*}\cdots\{b_{p_1}  t_{p_1}\}^{+}.
\end{align*}

And similarly, if there is a pair $b_xt_x$ is to the right of $b_{p_j}  t_{p_j}$ in $w$, then $b_{p_j}> b_x>1$ and $  t_{p_j}>t_x >1$
cannot happen at the same time.
That is to say, there is
$b_{p_j}>b_x>1 $ and $t_x= t_{p_j}$, or $b_x=1$ and $t_{p_j}\geq t_x>t_{p_1}$.
The subword of $w$ after $b_{p_j}  t_{p_j}$  belongs to the set of words that can be
generated by the expression
\begin{align*}
\{b_{p_j}  t_{p_j}\}^{+}\{(b_{p_j}-1) t_{p_j}\}^{*}\cdots \{1 t_{p_j}\}^{*}\cdots\{12\}^{*}.
\end{align*}

Thus, we obtain that every word $w\in S^{123}_{k,2i}$
with cut-pairs  $b_{p_1}  t_{p_1}, b_{p_2}  t_{p_2}, \ldots,  b_{p_j}  t_{p_j}$, where $p_1<p_2<\cdots< p_j$, belongs to the set of words that can be generated by the expression~\eqref{form_2}.

Finally, two different expressions of the form \eqref{form_2}  cannot produce the same word since they belong to two different cut-equivalence classes.
This completes the proof.
\end{proof}

From Lemma \ref{lem-cutpair}, we see that each cut-equivalence class in $S^{123}_{k,2i}$ can be represented by an expression of the form (\ref{form_2}). For example, given $k=5$, there are five solutions to $4>b_{p_1} >\cdots > b_{p_j}>1$ and $5>t_{p_1} > \cdots > t_{p_j}>2$ with $b_{p_s}<b_{p_s}$ for $1\leq s\leq j$. Indeed, it is not difficult to see that $0\leq j\leq 2$. When $j=0$, the solution is the empty set; when $j=1$, the three solutions are $\{23\}$, $\{24\}$ and $\{34\}$; when $j=2$, the unique solution is $\{34,23\}$. Note that each solution corresponds to a cut-equivalence class with the corresponding cut-pairs. Thus
$\mathcal{F}^{123}_{5,2i}$, the set of cut-equivalence classes for  $S^{123}_{5,2i}$, is as follows:

{\bf Class 1:}  $\{45\}^{*}\{35\}^{*}\{25\}^{*}\{15\}
^{*}\{14\}^{*}\{13\}^{*}\{12\}^{*}$;

{\bf Class 2:} $\{45\}^{*}\{35\}^{*}\{25\}^{*}\{24\}^{+}\{14\}^{*}\{13\}^{*}\{12\}^{*}$;

{\bf Class 3:} $\{45\}^{*}\{35\}^{*}\{25\}^{*}\{24\}^{*}\{23\}^{+}\{13\}^{*}\{12\}^{*}$;

{\bf Class 4:} $\{45\}^{*}\{35\}^{*}\{34\}^{+}\{24\}^{*}\{14\}^{*}\{13\}^{*}\{12\}^{*}$;

{\bf Class 5:} $\{45\}^{*}\{35\}^{*}\{34\}^{+}\{24\}^{*}\{23\}^{+}\{13\}^{*}\{12\}^{*}$.

\subsection{A bijection between Dyck paths  and cut-equivalence classes}

Let $\mathbf{D}_n$ denote the set of all Dyck paths of semi-length $n$. It is a well-known fact that the number of paths in $\mathbf{D}_n$  is given by $C_n$, the $n$-th Catalan number.

Each Dyck path in $\mathbf{D}_n$ can be encoded by a {\em Dyck word} $\pi=\pi_1\pi_2\cdots \pi_{2n}$, where $\pi_i\in \{U,D\}$ for $1\leq i\leq 2n$, and $\pi$ satisfies the condition that for  $1\leq k\leq 2n$,  the number of $U$s in $\pi_1\pi_2\cdots \pi_k$ is no less than the number of $D$s there. Thus, $U$ corresponds to an up-step $(1,1)$ and $D$ corresponds to a down-step $(1,-1)$. Slightly abusing the terminology, we think of a Dyck path to be the same as the Dyck word encoding it.

A {\em valley} in $\pi\in \mathbf{D}_n$ is an occurrence of $DU$, that is, a $U$ in $\pi$ immediately preceded by a $D$.  We let $v(\pi)$ denote the number of valleys in $\pi$. For example, Figure~\ref{dyckpathexample} shows a Dyck path $\pi$ of semi-length 8 with $v(\pi)=3$. It is a well-known result that the number of  Dyck paths of semi-length $n$ with $j$ valleys is given the Narayana number $N_{n,j}=\frac{1}{j+1}\binom{n}{j} \binom{n-1}{j}$, where $0\leq j \leq n-1$.

\begin{figure}[htbp]
  \centering
  \begin{tikzpicture}[scale=0.5]
  \draw[line width=0.02] (0, 0) grid (16, 4);
  \draw[rounded corners=1, color=black, line width=1] (0, 0) -- (1, 1) -- (2, 2) -- (3, 1) -- (4, 0) -- (5, 1) -- (6, 2) -- (7, 3) -- (8, 4) -- (9, 3) -- (10, 2) -- (11, 1) -- (12, 2) -- (13, 1) -- (14, 2) -- (15, 1) -- (16, 0);
\end{tikzpicture}
  \caption{The Dyck path $\pi=UUDDUUUUDDDUDUDD$.}
  \label{dyckpathexample}
\end{figure}
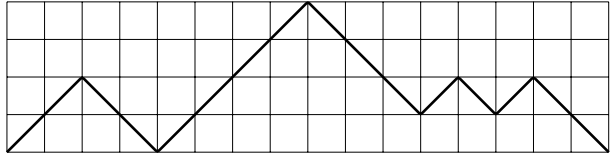

Given a cut-equivalence class in $S^{123}_{k,2i}$, we define a Dyck path as follows: start at the point $(0,0)$ and go along an up-step. Then if the pair immediately after $(k-2)k$ is $(k-3) k$, go along an up-step, while if it is $(k-2)(k-1)$, go along a down-step. In general, if in the following pair  the bottom element is decreased by 1, go along an up-step, while if the top element there is decreased by 1, go along a down-step. See Figure \ref{bbbb} for an example when $k=5$. Note that cut-pairs in the correspondence given by Figure \ref{bbbb} correspond to valleys, which is not a coincidence. This correspondence leads to the following theorem, the main result of this subsection.

\begin{figure}[htbp]
  \centering
\begin{tikzpicture}[scale=0.4]

%
%
%
%
%

\draw [line width=0.6]
(30,0)--(32,2)--(34,4)--(36,6)--(38,4)--(40,2)--(42,0);
\draw [line width=0.6]
(30,0)--(32,2)--(34,4)--(36,2)--(38,4)--(40,2)--(42,0);
\draw [line width=0.6]
(30,0)--(32,2)--(34,4)--(36,2)--(38,0)--(40,2)--(42,0);
\draw [line width=0.6]
(30,0)--(32,2)--(34,0)--(36,2)--(38,4)--(40,2)--(42,0);
\draw [line width=0.6]
(30,0)--(32,2)--(34,0)--(36,2)--(38,0)--(40,2)--(42,0);

\node [right] at (30,0){$45$};\node [right] at (34,0) {$34$};
\node [right] at (38,0){$23$};\node [right] at (42,0) {$12$};

\node [right] at (32,2){$35$};\node [right] at (36,2) {$24$};
\node [right] at (40,2){$13$};

\node [right] at (34,4) {$25$};\node [right] at (38,4){$14$};

\node [right] at (36,6) {$15$};

\end{tikzpicture}
\caption{The correspondence between cut-equivalence classes and  Dyck paths.}
 \label{bbbb}
\end{figure}
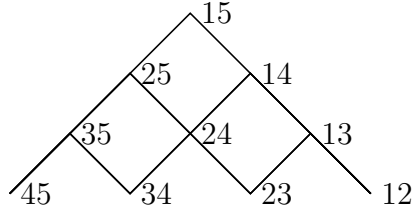

\begin{thm}\label{class_dyck}
There is a bijection $\phi$ from $\mathcal{F}^{123}_{k,2i}$ to $\mathbf{D}_{k-2}$ such that if $f\in \mathcal{F}^{123}_{k,2i}$ and $\phi(f)=\pi$ then the number of cut-pairs $n(f)$ is equal to the number of valleys $v(\pi)$.
\end{thm}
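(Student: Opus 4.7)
The plan is to formalize the construction of $\phi$ sketched just before the theorem and verify it carefully, then build an explicit inverse. Given $f \in \mathcal{F}^{123}_{k,2i}$, Lemma~\ref{lem-cutpair} associates to $f$ a distinguished expression of the form~\eqref{form_2}, determined by the cut-pair sequence $b_{p_1} t_{p_1}, \ldots, b_{p_j} t_{p_j}$. Extracting the list of distinct pairs appearing in~\eqref{form_2} in left-to-right order,
\[
(k-1,k),(k-2,k),\ldots,(b_{p_1},k),(b_{p_1},k-1),\ldots,(b_{p_1},t_{p_1}),(b_{p_1}-1,t_{p_1}),\ldots,(1,2),
\]
I would define $\phi(f)$ by writing $U$ for every transition that decreases the bottom coordinate by $1$ and $D$ for every transition that decreases the top coordinate by $1$.

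Next I would verify that $\phi(f) \in \mathbf{D}_{k-2}$. The bottom drops from $k-1$ to $1$ (contributing $k-2$ up-steps) and the top drops from $k$ to $2$ (contributing $k-2$ down-steps), so the word has length $2(k-2)$ and ends at height $0$. For non-negativity the key identity
\[
(\text{\# $U$-steps so far}) - (\text{\# $D$-steps so far}) = (k-1-a) - (k-b) = b-a-1 \geq 0
\]
holds at the pair $(a,b)$ reached after any prefix of the path, because every pair occurring in~\eqref{form_2} satisfies $a<b$ (as can be checked one block at a time using $b_{p_m}<t_{p_m}$ and the monotonicity of the cut-pair sequence).

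Third, I would show $n(f)=v(\phi(f))$. By the shape of~\eqref{form_2}, the distinct pairs between two consecutive cut-pairs (as well as those in the initial block before $b_{p_1}t_{p_1}$ and the final block after $b_{p_j}t_{p_j}$) form a sequence in which the bottom first decreases monotonically and only afterward the top decreases. Consequently each cut-pair is approached by a $D$-step and left by a $U$-step, so every cut-pair is a valley; conversely the only switch inside each inter-cut-pair block is from $U$s to $D$s, which produces a peak and no additional valley.

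Finally, I would construct $\phi^{-1}$. Given $\pi \in \mathbf{D}_{k-2}$, read off its valleys $(x_1,y_1),\ldots,(x_j,y_j)$ with $x_1<\cdots<x_j$, and set $a_m=k-1-(x_m+y_m)/2$, $b_m=k-(x_m-y_m)/2$. Because each valley is strictly interior, preceded by a $D$ and followed by a $U$, one obtains $1\le(x_m+y_m)/2\le k-3$ and $1\le(x_m-y_m)/2\le k-3$, i.e., $a_m\in\{2,\ldots,k-2\}$ and $b_m\in\{3,\ldots,k-1\}$, which is precisely the range required of cut-pairs. Moreover, between consecutive valleys the path executes strictly more $U$s and strictly more $D$s, so the sequences $(a_m)$ and $(b_m)$ are both strictly decreasing, yielding a valid cut-pair sequence whose image under $\phi$ is $\pi$. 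The main bookkeeping obstacle is this last step: ensuring that the interior/endpoint constraints on a Dyck-path valley translate exactly into the cut-pair range $1<a<k-1$, $2<b<k$, so that no Dyck path is left out and no invalid cut-pair is produced.
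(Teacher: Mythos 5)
Your construction is exactly the paper's map $\phi$: the same $U$/$D$ encoding of the distinct-pair sequence of \eqref{form_2}, the same verification that $b_{p_\ell}<t_{p_\ell}$ forces the prefix condition, the same identification of cut-pairs with valleys, and an inverse that recovers $b_{p_m}=k-1-\sum\alpha_l$ and $t_{p_m}=k-\sum\beta_l$ (your valley coordinates $(x_m,y_m)$ are just a reparametrization of the paper's run lengths $\alpha_m,\beta_m$). The proposal is correct and follows essentially the same route as the paper.
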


\begin{proof}
Let $f$ be the cut-equivalence class in $\mathcal{F}^{123}_{k,2i}$ with cut-pairs $b_{p_1}  t_{p_1}$, $b_{p_2}  t_{p_2}$, $\ldots$,  $b_{p_j}  t_{p_j}$.
We define $\pi=\phi(f)$ to be
\begin{align}\label{1}
\underbrace{U\cdots U}_{k-1-b_{p_1}}
\underbrace{D\cdots D}_{k-t_{p_1}}
\underbrace{U\cdots U}_{b_{p_1}-b_{p_2}}
\underbrace{D\cdots D}_{t_{p_1}-t_{p_2}}
\cdots\cdots
\underbrace{U\cdots U}_{b_{p_{j-1}}-b_{p_j}}
\underbrace{D\cdots D}_{t_{p_{j-1}}-t_{p_j}}
\underbrace{U\cdots U}_{b_{p_j}-1}
\underbrace{D\cdots D}_{t_{p_j}-2}.
\end{align}
In particular, if $f$ is the unique cut-equivalence class containing no cut-pairs, then
\begin{align*}
\phi(f)=\underbrace{U\cdots U}_{k-2}
\underbrace{D\cdots D}_{k-2}.
\end{align*}
Clearly, $\pi$ contains $k-2$ up-steps and $k-2$ down-steps.
Moreover, since $b_{p_{\ell}} < t_{p_{\ell}}$ for $1\le \ell \le j$, we have that $k-1-b_{p_{\ell}} \geq k-t_{p_{\ell}}$ for $1\le \ell \le j$, which implies that the number of up-steps is never less than that of down-steps in any initial part of $\pi$. Thus, $\pi\in \mathbf{D}_{k-2}$. Finally, note that $\pi$ contains exactly $j$ valleys since there are $j$ $DU$s in $\pi$, and thus $n(f)=v(\pi)$.

In order to show that $\phi$ is injective, we need to show that for different $f_1,f_2\in\mathcal{F}^{123}_{k,2i}$, we have $\phi(f_1)\neq\phi(f_2)$.
If $n(f_1)\neq n(f_2)$, then $v(\phi(f_1))\neq v(\phi(f_2))$ and thus $\phi(f_1)\neq\phi(f_2)$. If $n(f_1)=n(f_2)=j$, where $1\leq j\leq k-3$,
suppose that the cut-pairs of $f_1$ are $b_{p_1}  t_{p_1}$, $b_{p_2}  t_{p_2}$, $\ldots$,  $b_{p_j}  t_{p_j}$ and the cut-pairs of $f_2$ are $b^{'}_{p_1}  t^{'}_{p_1}$, $b^{'}_{p_2}  t^{'}_{p_2}$, $\ldots$,  $b^{'}_{p_j}  t^{'}_{p_j}$.
Let $j^*$ be the smallest index such that $b_{p_{j^{*}}}  t_{p_{j^{*}}}\neq b^{'}_{p_{j^{*}}}  t^{'}_{p_{j^{*}}}$, so that for any $j^{**}$, $1\leq j^{**} <j^{*}$, we have $b_{p_{j^{**}}}  t_{p_{j^{**}}}= b^{'}_{p_{j^{**}}}  t^{'}_{p_{j^{**}}}$. According to the definition of $\phi$, $\phi(f_1)$ and $\phi(f_2)$ are the same in the first $k-1-b_{p_{j^{*}-1}}$ up-steps and the first $k-t_{p_{j^{*}-1}}$ down-steps. Then in $\phi(f_1)$, $b_{p_{j^{*}-1}}-b_{p_{j^{*}}}$ up-steps and $t_{p_{j^{*}-1}}-t_{p_{j^{*}}}$ down-steps follow, and in $\phi(f_2)$, $b_{p_{j^{*}-1}}-b^{'}_{p_{j^{*}}}$ up-steps and $t_{p_{j^{*}-1}}-t^{'}_{p_{j^{*}}}$ down-steps follow. However, because either $b_{p_{j^{*}-1}}-b_{p_{j^{*}}}\neq b_{p_{j^{*}-1}}-b^{'}_{p_{j^{*}}}$ or $t_{p_{j^{*}-1}}-t_{p_{j^{*}}}\neq t_{p_{j^{*}-1}}-t^{'}_{p_{j^{*}}}$, we have that $\phi(f_1)\neq\phi(f_2)$.

To complete the proof, it remains to describe the inverse map $\phi^{-1}$.
For any Dyck path $\pi\in \mathbf{D}_{k-2}$ with $v(\pi)=j$, $\pi$ must be of the form
\begin{align}\label{2}
\underbrace{U\cdots U}_{\alpha_1}
\underbrace{D\cdots D}_{\beta_1}
\underbrace{U\cdots U}_{\alpha_2}
\underbrace{D\cdots D}_{\beta_2}
\cdots\cdots
\underbrace{U\cdots U}_{\alpha_j}
\underbrace{D\cdots D}_{\beta_j}
\underbrace{U\cdots U}_{\alpha_{j+1}}
\underbrace{D\cdots D}_{\beta_{j+1}},
\end{align}
where $\alpha_m>0$ and $\beta_m>0$ for $1\leq m\leq j+1$,  and $\sum_{i=1}^{j+1}\alpha_{i}=\sum_{i=1}^{j+1}\beta_{i}=k-2$.
We define the corresponding cut-equivalence class as follows. For $1\leq m\leq j$,
let $$b_{p_m}=k-1-(\alpha_1+\alpha_2+\cdots +\alpha_m)$$ and
$$t_{p_m}=k-(\beta_1+\beta_2+\cdots +\beta_m).$$
It is clear that $k-1>b_{p_1} > b_{p_2} >\cdots > b_{p_j}>1$ and
$k>t_{p_1} > t_{p_2} >\cdots > t_{p_j}>2$.
By Lemma \ref{lem-cutpair}, the cut-pairs of a 123-avoiding up-down word uniquely determine the cut-equivalence class that it belongs to. Thus, we can determine the cut-equivalence class $f$ corresponding to the Dyck path $\pi$ from the sequence of integer pairs  $\{(b_{p_m},  t_{p_m}) \}_{m=1}^{j}$. Clearly, we have $n(f)=j$.

Moreover, combining forms (\ref{1}) and (\ref{2}), we can get that $\phi\circ\phi^{-1}=\phi^{-1}\circ\phi=id$.
This completes the proof.
\end{proof}

To illustrate the bijection given in Theorem~\ref{class_dyck},  we consider the set $S^{123}_{5,2i}$ whose five cut-equivalence classes were listed above. The Dyck paths corresponding to these classes, in the respective order, are given in Figure~\ref{aaaaa}.
{\bf Class 1} is the only class in  $S^{123}_{5,2i}$ which has no cut-pair.
{\bf Classes 2, 3} and {\bf 4} have one cut-pair. The only class in  $S^{123}_{5,2i}$ which has two cut-pairs is {\bf Class 5}.

\begin{figure}[htbp]
  \centering
\begin{tikzpicture}[scale=0.4]

\draw[line width=0.005] (8, 0) grid (14,3);
\draw (8,0) -- +(6,0);
\draw [line width=0.5]
(8,0)--(9,1)--(10,2)--(11,3)--(12,2)--(13,1)--(14,0);

\draw[line width=0.005] (16, 0) grid (22,3);
\draw (16,0) -- +(6,0);
\draw [line width=0.6]
(16,0)--(17,1)--(18,2)--(19,1)--(20,2)--(21,1)--(22,0);
\draw (19,1) node [scale=0.25, circle, draw,fill=red]{};

\draw[line width=0.005] (24, 0) grid (30,3);
\draw (24,0) -- +(6,0);
\draw [line width=0.6]
(24,0)--(25,1)--(26,2)--(27,1)--(28,0)--(29,1)--(30,0);
\draw (28,0) node [scale=0.25, circle, draw,fill=red]{};

\draw[line width=0.005] (32, 0) grid (38,3);
\draw (32,0) -- +(6,0);
\draw [line width=0.6]
(32,0)--(33,1)--(34,0)--(35,1)--(36,2)--(37,1)--(38,0);
\draw (34,0) node [scale=0.25, circle, draw,fill=red]{};

\draw[line width=0.005] (40, 0) grid (46,3);
\draw (40,0) -- +(6,0);
\draw [line width=0.5]
(40,0)--(41,1)--(42,0)--(43,1)--(44,0)--(45,1)--(46,0);
\draw (42,0) node [scale=0.25, circle, draw,fill=red]{};
\draw (44,0) node [scale=0.25, circle, draw,fill=red]{};
\end{tikzpicture}
 \caption{Dyck paths corresponding to cut-equivalence {\bf Classes 1--5} in $S^{123}_{5,2i}$, respectively.}
 \label{aaaaa}
\end{figure}
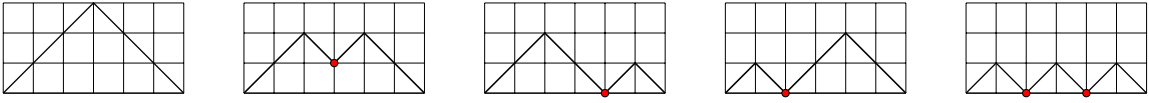

The following statement is an immediate corollary to Theorem~\ref{class_dyck} and well-known enumerative properties of Dyck paths.

\begin{cor}\label{lem:cut}
There are $C_{k-2}$
 equivalence classes with respect to the cut-equivalence relation in  $S^{123}_{k,2i}$.
Moreover, the number of  cut-equivalence classes with $j$ cut-pairs in  $S^{123}_{k,2i}$ is $N_{k-2,j}$,  where $0\leq j\leq k-3$.
\end{cor}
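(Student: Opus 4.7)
The plan is to invoke the bijection $\phi\colon\mathcal{F}^{123}_{k,2i}\to\mathbf{D}_{k-2}$ established in Theorem~\ref{class_dyck} and read off both statements from well-known enumerative facts about Dyck paths. There is essentially no obstacle here; the content of the corollary is packaged inside the bijection, and only two standard Dyck-path identities are needed.

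For the first claim, I would note that $\phi$ is a bijection between $\mathcal{F}^{123}_{k,2i}$ and $\mathbf{D}_{k-2}$, so the two sets have the same cardinality. Since the number of Dyck paths of semi-length $n$ is the Catalan number $C_n$, we conclude that $|\mathcal{F}^{123}_{k,2i}|=C_{k-2}$.

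For the second claim, I would use the additional property of $\phi$ recorded in Theorem~\ref{class_dyck}: if $\phi(f)=\pi$ then $n(f)=v(\pi)$. Consequently $\phi$ restricts to a bijection between cut-equivalence classes with exactly $j$ cut-pairs and Dyck paths of semi-length $k-2$ with exactly $j$ valleys. Recalling the well-known fact (stated right after Figure~\ref{dyckpathexample}) that the number of Dyck paths of semi-length $n$ with $j$ valleys equals the Narayana number $N_{n,j}=\frac{1}{j+1}\binom{n}{j}\binom{n-1}{j}$ for $0\le j\le n-1$, we obtain that the number of cut-equivalence classes with $j$ cut-pairs in $S^{123}_{k,2i}$ is $N_{k-2,j}$ for $0\le j\le k-3$. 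Summing over $j$ recovers the Catalan total, consistent with the first part. As the corollary is announced to be immediate, the write-up will just be two or three short sentences assembling these observations.
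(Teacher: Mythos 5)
Your proposal is correct and follows exactly the paper's intended argument: the paper states the corollary as an immediate consequence of Theorem~\ref{class_dyck} together with the standard facts that $|\mathbf{D}_{k-2}|=C_{k-2}$ and that Dyck paths of semi-length $k-2$ with $j$ valleys are counted by $N_{k-2,j}$. Nothing is missing.
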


\subsection{An alternative enumeration of \texorpdfstring{$N^{123}_{k,2i}$}{Lg}}

Corollary \ref{lem:cut} allows us to give an alternative, combinatorial proof of the following theorem appearing in~\cite{GKZ}.

\begin{thm}[\cite{GKZ}]\label{thm-alt-comb-proof} For $k\ge 3$, we have
$$N^{123}_{k,2i}=\frac{1}{i+1} \binom{i+k-2}{i} \binom{i+k-1}{i}.$$
\end{thm}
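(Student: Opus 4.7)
The plan is to compute $N^{123}_{k,2i}$ by decomposing $S^{123}_{k,2i}$ into cut-equivalence classes (Corollary \ref{lem:cut}), counting the words of length $2i$ in each class via a stars-and-bars argument, and then evaluating the resulting Narayana-type sum.

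First, I would fix a cut-equivalence class $f \in \mathcal{F}^{123}_{k,2i}$ with exactly $j$ cut-pairs and count its elements. By Lemma \ref{lem-cutpair}, every word in $f$ is generated uniquely by the expression (\ref{form_2}). Under the bijection $\phi$ of Theorem \ref{class_dyck}, the successive pair-slots of this expression are in bijection with the $2k-3$ vertices of the Dyck path $\phi(f) \in \mathbf{D}_{k-2}$: the $j$ valley vertices give rise to the $\{\cdot\}^{+}$-slots (each of which must contribute at least one pair), while the remaining $2k-3-j$ vertices give rise to the $\{\cdot\}^{*}$-slots (which may contribute zero or more pairs). Selecting a word of length $2i$ in $f$ is therefore equivalent to choosing nonnegative multiplicities for the $2k-3$ slots that sum to $i$ with $j$ of them forced to be at least $1$. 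A standard stars-and-bars calculation (subtract $1$ from each forced slot) then produces
$$\binom{(i-j)+(2k-3)-1}{(2k-3)-1} = \binom{i+2k-4-j}{2k-4}$$
words in $f$. Combining this with Corollary \ref{lem:cut}, which asserts that there are $N_{k-2,j}$ cut-equivalence classes with $j$ cut-pairs, yields
$$N^{123}_{k,2i} = \sum_{j=0}^{k-3} N_{k-2,j}\binom{i+2k-4-j}{2k-4}.$$

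The remaining, and principal, task is to show that this Narayana-type convolution equals the claimed closed form. Writing $n = k-2$, this reduces to the identity
$$\sum_{j=0}^{n-1} N_{n,j}\binom{i+2n-j}{2n} = \frac{1}{i+1}\binom{i+n}{i}\binom{i+n+1}{i},$$
whose right-hand side is the Narayana number $N_{n+i+1,i}$. This is the main obstacle: the counting setup above is straightforward once the cut-pair machinery is in place, but the final summation requires an auxiliary identity. I would establish it either (i) algebraically, using the generating function $\sum_j N_{n,j} x^j$ together with a Vandermonde--Chu convolution to collapse the sum to the advertised product, or (ii) combinatorially, by interpreting both sides as enumerating the same family of lattice paths --- for instance, viewing the tuple (class, multiplicity vector) as an inflated Dyck path of semi-length $n+i+1$ in which the $i-j$ extra pairs play the role of additional peaks attached to the $2n+1$ vertices of the small path, so that the $j$ original valleys together with the new interior valleys account for exactly $i$ valleys in the inflated path. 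The algebraic route is a safe fallback in case the combinatorial bijection requires intricate bookkeeping.
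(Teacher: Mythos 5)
Your combinatorial setup coincides with the paper's own proof: the same use of Lemma~\ref{lem-cutpair} and Theorem~\ref{class_dyck} to identify the $2k-3$ pair-slots of expression (\ref{form_2}), the same stars-and-bars count $\binom{2k-4+i-j}{2k-4}$ for a class with $j$ cut-pairs, and the same intermediate formula $N^{123}_{k,2i}=\sum_{j=0}^{k-3}N_{k-2,j}\binom{2k-4+i-j}{2k-4}$. Up to that point your argument is correct and complete.

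The gap is in the finale, which you yourself flag as the principal task but do not carry out. After inserting the formula for $N_{k-2,j}$, the sum involves a product of three binomial coefficients together with the factor $1/(j+1)$; as a hypergeometric series it is a balanced ${}_3F_2$ evaluated at $1$, and the paper closes it by quoting the Saalsch\"utz identity. A Chu--Vandermonde convolution, which evaluates a ${}_2F_1$ at $1$, will not collapse such a sum as stated, so your route (i) is underspecified exactly where the work lies: you would need Saalsch\"utz (or an equivalent two-step reduction) rather than Vandermonde. Route (ii) is more problematic still: a bijection realizing each pair (class, multiplicity vector) as a Dyck path of semi-length $k+i-1$ with $i$ valleys would amount to a direct bijective proof that these words are Narayana-counted, which is precisely the open problem the authors pose in their concluding section; it should not be presented as a routine fallback. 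As written, therefore, the proof is incomplete at its decisive step, even though the combinatorial architecture matches the paper's.
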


\begin{proof}
Let $f$ be the cut-equivalence class corresponding to cut-pairs
$b_{p_1}  t_{p_1}$, $b_{p_2}  t_{p_2}$, $\ldots$,  $b_{p_j}  t_{p_j}$.
We first claim that the number of words of length $2i$ belonging to  $f$ is
$\binom{2 k-4+i-j}{2 k-4}$. Indeed, by Lemma~\ref{lem-cutpair}, any word $w\in f$ must be obtained from (\ref{form_2}). Further, by Theorem~\ref{class_dyck}, there are at most $2(k-2)+1=2k-3$ distinct pairs in (\ref{form_2}), which gives an upper bound on the number of distinct pairs in $\mathcal{P}_w$.
For $1\leq i\leq 2k-3$, we let $x_i$ denote the number of times the $i$-th pair in (\ref{form_2}), from left to right, appears in $w$. Thus the words in $f$ are in 1-to-1 correspondence with nonnegative solutions of the equation $x_1+x_2+\cdots+x_{2k-3}=i$, where $j$ specified $x_m$s (corresponding to cut-pairs) are forced to be positive. The number of such solutions is $\binom{2 k-4+i-j}{2 k-4}$, as desired.

Combining the last statement with Corollary~\ref{lem:cut}, we obtain that
\begin{align*}
N^{123}_{k,2i} & =
\sum _{j=0}^{k-3}N_{k-2,j}\binom{2 k-4+i-j}{2 k-4}.
\end{align*}
In what follows, we shall give a closed form formula for $N^{123}_{k,2i}$. We start with using the formula for the Narayana numbers $N_{k-2,j}$ to obtain 
\begin{align*}
N^{123}_{k,2i} & = \sum _{j=0}^{k-3} \frac{1}{j+1} \binom{k-3}{j} \binom{k-2}{j} \binom{2 k-4+i-j}{2 k-4}.
\end{align*}
Since the factor $\binom{k-3}{j}$ vanishes for $j>k-3$, we have that 
\begin{align}\label{eq:N}
N^{123}_{k,2i} & = \sum _{j=0}^{\infty} \frac{1}{j+1} \binom{k-3}{j} \binom{k-2}{j} \binom{2 k-4+i-j}{2 k-4}.
\end{align}
We next use the approach described in~\cite[p. 35]{Petkovsek1996AB} to express \eqref{eq:N} in terms of  a hypergeometric series. 
Denote $(a)_n$ the rising factorial $a(a+1)\cdots (a+n-1)$ and let $\pFq{3}{2}{a_1,a_2,a_3}{b_1,b_2}{z}$ be
$$\sum_{s = 0}^{\infty} \frac{(a_1)_s (a_2)_s (a_3)_s}{(b_1)_s (b_2)_s s!}z^s.$$
Since the constant coefficient of \eqref{eq:N} is $\binom{i+2 k-4}{2 k-4}$ and the ratio between consecutive coefficients in \eqref{eq:N} is
\begin{align*}
 \frac{[x^{j+1}]N^{123}_{k,2i}}{[x^{j}]N^{123}_{k,2i}} =  \frac{(3-k+j) (2-k+j) (-i+j)}{(2+j) (4-i-2k+j) (1+j)},
\end{align*}
it follows that
$N^{123}_{k,2i}$ can be expressed as 
\begin{align}\label{eq:3F2}
 \binom{i+2 k-4}{2 k-4} \times 
 \pFq{3}{2}{3-k,2-k,-i}{2,4-i-2k}{1}.
\end{align}
The {S}aalsch\"utz  identity \cite[p. 9]{Bailey1964Generalized} says that 
\begin{align*}
\pFq{3}{2}{a,b,-n}{c,1+a+b-c-n}{1}=\frac{(c-a)_n (c-a)_n}{(c)_n (c-a-b)_n}.
\end{align*}
By setting $a=3-k$, $b=2-k$, $c=2$ and $i=n$, we have that 
\begin{align*}
\pFq{3}{2}{3-k,2-k,-i}{2,4-i-2k}{1}
 = \frac{(k-1)_i (k)_i}{(2)_i (2k-3)_i }
 = \frac{(i+k-2)! (i+k-1)! (2k-4)!}{(k-2)! (k-1)! (i+1)! (i+2k-4)!}.
\end{align*}
Substituting the last formula into \eqref{eq:3F2}, we obtain that 
\begin{align*}
 N^{123}_{k,2i} &= \frac{1}{i+1}\binom{i+k-2}{i} \binom{i+k-1}{i},
\end{align*}
which completes the proof.
\end{proof}

To illustrate Theorem~\ref{thm-alt-comb-proof}, the words in the five cut-equivalence classess in  $S^{123}_{5,2i}$ are enumerated by $\binom{i+6}{6}$, $\binom{i+5}{6}$, $\binom{i+5}{6}$, $\binom{i+5}{6}$, and $\binom{i+4}{6}$, respectively.
Hence,  the number of words in  $S^{123}_{5,2i}$ is $$\binom{i+6}{6}+3\binom{i+5}{6}+\binom{i+4}{6}= \frac{1}{i+1}\binom{i+4}{4}\binom{i+3}{3}.$$

\section{Enumeration of length 3 consecutive pattern-avoiding up-down words}\label{three-consecutive-up-down-sec}

The following theorem is a straightforward corollary to Formula (1) in~\cite{GKZ}, since the patterns $\underline{123}$ and $\underline{321}$ do not bring any new restrictions on alternating words, and thus $S^{\underline{123}}_{k,\ell}=S^{\underline{321}}_{k,\ell}$ is the set of up-down words of length $\ell$ over $[k]$. In what follows, $\delta_{a,b}$ is the\emph{ Kronecker delta}, which is equal to $1$ if $a=b$ and $0$ otherwise. Also, $\chi(a)$ equals $1$ if $a$ is true, and $0$ otherwise.

\begin{thm}\label{trivial-case} We have
$$N^{\underline{123}}_{k,\ell}=N^{\underline{321}}_{k,\ell}=M_{k,\ell},$$
where the numbers $M_{k,\ell}$ satisfy the following recurrence relation for $k\ge 3$ and $\ell\ge 2$:
\begin{align}\label{eq-N-rec}
 M_{k,\ell} = M_{k-1,\ell} + \sum_{i=0}^{\lfloor \frac{\ell-1}{2}\rfloor} M_{k-1,2i} M_{k,\ell-2i-1} - \chi(\ell\mathrm{~is~ even~}) \cdot M_{k-1,\ell-2}
\end{align}
with the initial conditions $M_{k, 0}=1$, $M_{k, 1}=k$ for $k\ge 2$, and $M_{2,\ell}=1$ for $\ell \ge 2$.
\end{thm}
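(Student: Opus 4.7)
The plan is to first observe that the patterns $\underline{123}$ and $\underline{321}$ cannot occur in any up-down word, because three consecutive letters in an up-down word strictly alternate between an ascent and a descent. Therefore $S^{\underline{123}}_{k,\ell}=S^{\underline{321}}_{k,\ell}$ is simply the set of all up-down words of length $\ell$ over $[k]$, and $M_{k,\ell}$ is the total count of such words. The initial conditions then fall out directly: $M_{k,0}=1$ counts the empty word, $M_{k,1}=k$ counts single-letter words, and $M_{2,\ell}=1$ counts the unique alternating word $1212\cdots$ on $\{1,2\}$.

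For the recurrence with $k\ge 3$ and $\ell\ge 2$, I would decompose each up-down word $w$ over $[k]$ by whether or not the letter $1$ occurs. Words avoiding $1$ are up-down words over $\{2,\ldots,k\}$, and the relabeling $c\mapsto c-1$ puts them in bijection with up-down words over $[k-1]$; they contribute the $M_{k-1,\ell}$ term. For a word containing $1$, note that since $1$ is the global minimum it can only appear at a bottom (odd) position, so let its first occurrence be at position $p=2i+1$ with $0\le i\le\lfloor(\ell-1)/2\rfloor$. The prefix $w_1\cdots w_{2i}$ is then an up-down word of length $2i$ using letters from $\{2,\ldots,k\}$, contributing a factor of $M_{k-1,2i}$. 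The suffix $w_{2i+2}\cdots w_\ell$ has length $\ell-2i-1$, starts at a top position with value strictly greater than $1$, and so is a down-up word over $[k]$ with first letter $\ge 2$; applying the complement $c\mapsto k+1-c$, a length-preserving bijection between down-up and up-down words over $[k]$, transforms this into an up-down word of length $\ell-2i-1$ over $[k]$ whose first letter is $\le k-1$.

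The main (and essentially only) subtlety is that this last first-letter constraint is automatic unless the suffix has length exactly $1$: when the suffix has length $\ge 2$ its first letter is a bottom, hence strictly smaller than the following top $\le k$, so $\le k-1$ automatically; when the suffix has length $0$ the constraint is vacuous. Only for suffix length $1$ is the constraint genuine, and there it cuts the count from $M_{k,1}=k$ down to $k-1$. A suffix of length $1$ appears in the sum precisely when $i=(\ell-2)/2$, which requires $\ell$ to be even. Replacing the true suffix count by $M_{k,\ell-2i-1}$ therefore overcounts by exactly one in that single term, producing an overcount of $1\cdot M_{k-1,\ell-2}$ that must be subtracted. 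This yields the correction $-\chi(\ell\text{ is even})\cdot M_{k-1,\ell-2}$, and adding the ``no $1$'' contribution $M_{k-1,\ell}$ completes the recurrence. The hard part is really just this boundary bookkeeping; the rest is a routine first-occurrence decomposition.
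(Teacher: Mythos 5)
Your proof is correct. The paper's own treatment of this theorem is much shorter: it makes the same opening observation that you do --- an occurrence of $\underline{123}$ or $\underline{321}$ would force two consecutive ascents or two consecutive descents, which is impossible in an up-down word, so $M_{k,\ell}$ is simply the total number of up-down words of length $\ell$ over $[k]$ --- and then cites the recurrence as Formula (1) of \cite{GKZ} without rederiving it. You instead give a self-contained derivation by conditioning on the first occurrence of the letter $1$, which must sit at a bottom position $2i+1$; this splits the word into a $1$-free up-down prefix of length $2i$ over $\{2,\dots,k\}$ (counted by $M_{k-1,2i}$) and a down-up suffix of length $\ell-2i-1$ over $[k]$ whose first letter must exceed $1$, while the no-$1$ words account for the $M_{k-1,\ell}$ term. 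Your handling of the one genuine boundary issue is right: after complementing the suffix into an up-down word, the first-letter constraint is automatic for suffix length $0$ or at least $2$ (a bottom element is strictly below the adjacent top, hence at most $k-1$), and bites only for suffix length exactly $1$, where it removes one of the $M_{k,1}=k$ choices; that length occurs for the single index $i=(\ell-2)/2$, which exists only when $\ell$ is even, yielding precisely the correction $-\chi(\ell\mathrm{~is~even~})\cdot M_{k-1,\ell-2}$. The initial conditions are also verified correctly (and the recurrence checks out on small cases such as $M_{3,4}=8$). The net effect is that your argument makes the theorem independent of \cite{GKZ}, at the cost of the bookkeeping that the paper chose to outsource.
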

%

\subsection{\texorpdfstring{$\underline{132}$}{Lg}-avoiding up-down words}\label{sub-cons-132}

Table~\ref{tab1} provides the numbers $N_{k,\ell}^{\underline{132}}$ of $\underline{132}$-avoiding up-down words of length $\ell$ over an alphabet $[k]$ for small values of $k$ and $\ell$.
For convenience, we present separately even and odd length cases.

\begin{table}[!htb]
    \centering
    \begin{minipage}{.5\textwidth}
        \centering
    \begin{tabular}{|c|lllll|l|}
\hline
\diagbox{$k$}{$\ell$}& 0 & 2  & 4  &6 &8\\
\hline
2&  1& 1& 1&1&1\\
\hline
3&  1&3& 7&  15&  31\\
\hline
4&  1&  6& 25&  90& 301\\
\hline
5&  1&  10&  65&  350& 1701\\
\hline
\end{tabular}
    \end{minipage}%
    \begin{minipage}{0.5\textwidth}
        \centering
\begin{tabular}{|c|lllll|l|}
\hline
\diagbox{$k$}{$\ell$}&  1  & 3  &5 & 7&9\\
\hline
2&   2& 1& 1& 1&1\\
\hline
3&   3& 4& 8& 16&32\\
\hline
4&    4&  10&  33&  106 &333\\
\hline
5&   5& 20&  98&456&2034\\
\hline
\end{tabular}
    \end{minipage}
  \caption{$N_{k,\ell}^{\underline{132}}$ for small values of $k$ and~$\ell$.}
\label{tab1}
\end{table}

\begin{lem}\label{132}
An up-down word $w=w_1w_2\cdots w_{\ell}$
 is $\underline{132}$-avoiding if and only if the bottom elements of $w$ are weakly decreasing from left to right, i.e.,
$$b_1\geq b_2\geq\dots \geq b_{\lceil\frac{\ell}{2}\rceil}.$$
\end{lem}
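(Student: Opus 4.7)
The plan is to prove the lemma by a direct case analysis on the three consecutive positions in $w$ that could form an occurrence of $\underline{132}$. Since any three consecutive letters in an up-down word either have the form $b_j t_j b_{j+1}$ (bottom--top--bottom) or the form $t_j b_{j+1} t_{j+1}$ (top--bottom--top), it suffices to examine when each of these two shapes can realize the pattern $132$ (which requires the first letter to be the smallest, the middle letter to be the largest, and the last letter to be strictly between them).

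First I would dispense with the shape $t_j b_{j+1} t_{j+1}$: since $w$ is up-down we have $t_j > b_{j+1}$, so the first letter is already bigger than the middle one, and this triple cannot match $132$ (which needs the first letter to be the smallest of the three). Next I would analyze the shape $b_j t_j b_{j+1}$: the up-down inequalities $b_j < t_j$ and $b_{j+1} < t_j$ hold automatically, so the triple matches $132$ precisely when $b_j < b_{j+1}$ (this ensures that $b_j$ is the smallest and $b_{j+1}$ lies strictly between $b_j$ and $t_j$). Hence an occurrence of $\underline{132}$ in $w$ exists if and only if some pair of consecutive bottoms satisfies $b_j < b_{j+1}$.

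Finally, I would contrapose this conclusion: $w$ is $\underline{132}$-avoiding iff $b_j \ge b_{j+1}$ for every valid $j$, which is exactly the stated weakly decreasing condition $b_1 \ge b_2 \ge \cdots \ge b_{\lceil \ell/2 \rceil}$. There is no real obstacle here; the proof is a two-case check, and the main point to state cleanly is that the top--bottom--top shape is ruled out structurally while the bottom--top--bottom shape reduces the pattern condition to the single inequality between consecutive bottoms.
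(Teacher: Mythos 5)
Your proposal is correct and follows essentially the same route as the paper: one direction exhibits $b_j t_j b_{j+1}$ as an occurrence when $b_j<b_{j+1}$, and the other observes that any consecutive occurrence of $132$ must sit on a bottom--top--bottom triple (the top--bottom--top shape being structurally excluded), forcing $b_j<b_{j+1}$. No gaps.
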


\begin{proof}
If there were some $j$, $1\leq j\leq \lceil\frac{\ell}{2}\rceil-1$, such that $b_{j} < b_{j+1}$, then $b_jt_jb_{j+1}$ would form an occurrence of the pattern $\underline{132}$.

Conversely, if there is an occurrence $w_jw_{j+1}w_{j+2}$ of the pattern $\underline{132}$  in $w$, where $1\leq j\leq \ell-2$, then we have $w_j< w_{j+2} < w_{j+1}$. According to the definition of up-down words, $w_{j+1}$ must be a top element in $w$, and $w_j$ and $w_{j+2}$ must be bottom elements in $w$, and $w_j< w_{j+2}$.

This completes the proof.
\end{proof}
Let $A_{k,\ell}=N_{k,\ell}^{\underline{132}}$. Next theorem enumerates $A_{k,2i}$.

\begin{thm}\label{thm 1.4}
For all $k\geq 2$ and $i\geq 0$, we have $$A_{k,2i}=S(k+i-1,k-1),$$ where $S(n,m)$ is a Stirling number of the second kind.
\end{thm}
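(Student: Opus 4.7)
The plan is to reduce the enumeration to a weighted sum over weakly decreasing sequences of bottom elements, and then invoke a classical identity for the Stirling numbers of the second kind.

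First, I would apply Lemma~\ref{132} to parametrize the elements of $S^{\underline{132}}_{k, 2i}$. Each such word $w = b_1 t_1 b_2 t_2 \cdots b_i t_i$ is uniquely determined by a pair consisting of a weakly decreasing sequence $k-1 \geq b_1 \geq b_2 \geq \cdots \geq b_i \geq 1$ (where the upper bound $k-1$ comes from $b_j < t_j \leq k$) together with top values $t_j \in \{b_j+1, b_j+2, \ldots, k\}$. The key observation is that the up-down condition $t_j > b_{j+1}$ is automatically implied by $t_j > b_j \geq b_{j+1}$, so no further restriction on the top elements is required.

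Second, since each $t_j$ has exactly $k - b_j$ admissible values independently, I would write
$$A_{k, 2i} = \sum_{k-1 \geq b_1 \geq b_2 \geq \cdots \geq b_i \geq 1} \prod_{j=1}^{i} (k - b_j).$$
Making the substitution $c_j = k - b_j$ reverses the inequalities and turns this into
$$A_{k, 2i} = \sum_{1 \leq c_1 \leq c_2 \leq \cdots \leq c_i \leq k-1} c_1 c_2 \cdots c_i.$$

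Third, I would invoke the classical identity
$$S(n, m) = \sum_{1 \leq c_1 \leq c_2 \leq \cdots \leq c_{n-m} \leq m} c_1 c_2 \cdots c_{n-m},$$
which is a standard consequence of the recurrence $S(n, m) = m\, S(n-1, m) + S(n-1, m-1)$: splitting the sum on whether $c_{n-m} = m$ (factoring out $m$ and decreasing the length by one) or $c_{n-m} \leq m-1$ (which forces all entries into $[m-1]$) reproduces exactly this recurrence, and the base cases agree. Applying the identity with $n = k+i-1$ and $m = k-1$ yields $A_{k, 2i} = S(k+i-1, k-1)$, as desired.

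I do not anticipate a significant obstacle: the main content lies in correctly handling the automatic satisfaction of $t_j > b_{j+1}$ (so that tops contribute an independent factor $k - b_j$) and in recognizing the summation as the classical Stirling identity. If a more self-contained argument is preferred, one could alternatively verify the formula by induction on $k$, splitting $S^{\underline{132}}_{k, 2i}$ according to whether the letter $k-1$ appears as a bottom element; this matches the Stirling recurrence $S(k+i-1, k-1) = (k-1) S(k+i-2, k-1) + S(k+i-2, k-2)$ after a short argument.
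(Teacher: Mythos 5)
Your proof is correct, but it takes a genuinely different route from the paper's. The paper proves the statement by deriving the recurrence $A_{k,2i}=A_{k-1,2i}+(k-1)A_{k,2i-2}$ from a case split on whether the letter $1$ occurs in $w$ (if it does, Lemma~\ref{132} forces $w_{2i-1}=1$, so $w=w'1w''$ with $w''\in\{2,\ldots,k\}$), and then matches this recurrence and its initial conditions against the standard recurrence $S(k+i-1,k-1)=S(k+i-2,k-2)+(k-1)S(k+i-2,k-1)$. You instead use Lemma~\ref{132} to obtain a closed-form sum: a word in $S^{\underline{132}}_{k,2i}$ is exactly a weakly decreasing bottom sequence $k-1\geq b_1\geq\cdots\geq b_i\geq 1$ together with independent choices $t_j\in\{b_j+1,\ldots,k\}$, the up-down constraint $t_j>b_{j+1}$ being automatic from $t_j>b_j\geq b_{j+1}$ (this is the one point that needed checking, and you checked it). This gives $A_{k,2i}=\sum\prod_j(k-b_j)=h_i(1,2,\ldots,k-1)$, the complete homogeneous symmetric polynomial, and you conclude by the classical identity $S(n,m)=h_{n-m}(1,\ldots,m)$, which you correctly justify via the Stirling recurrence. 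Both arguments are sound; yours is in effect the coefficient-level version of the generating function $A_k(x)=1/\bigl((1-x)(1-2x)\cdots(1-(k-1)x)\bigr)$ that the paper derives afterwards from its recurrence, and it has the advantage of exhibiting an explicit product weight for each bottom sequence. The paper's approach is more self-contained (it needs only the Stirling recurrence, not the symmetric-function identity) and its recurrence is reused directly in the subsequent generating-function computation.
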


\begin{proof}

Note that for $k\geq 3$ and $i\geq 1$, any $\underline{132}$-avoiding up-down word $w$ of length $2i$ over $[k]$, belongs to one of the following two cases:
\begin{itemize}
\item[(a)] There are no $1$s in $w$. These words are counted by $A_{k-1,2i}$ (which can be seen by subtracting a 1 from each element in $w$);
\item[(b)] There is at least one $1$ in $w$.  By Lemma \ref{132}, $w_{2i-1}=1$, since $w_{2i-1}$ is the minimum element in $w$. Thus $w$ is of the form $w'1w''$, where $w'$ is a $\underline{132}$-avoiding up-down word of length $2i-2$ and $w''$ is a letter in $\{2,3,\ldots,k\}$. Such words are counted by $(k-1)A_{k,2i-2}$.
\end{itemize}
Hence for $k\geq 3$ and $i\geq 1$, the numbers  $A_{k,2i}$ satisfy the recurrence relation
\begin{align}\label{rec}
A_{k,2i}
=A_{k-1, 2i}+(k-1)A_{k,2i-2}
\end{align}
with the initial  conditions $A_{2,2i}= 1$ for all $i\geq1$ and $A_{k,0}=1$ for all $k \geq 2$, which are easy to check.

We have that $A_{k,2i}=S(k+i-1,k-1)$ since these numbers have the same recurrence relation and initial conditions. Indeed, from a well-known recurrence relation for the Stirling numbers of the second kind,
$$S(k+i-1,k-1)=S(k+i-2,k-2)+(k-1)S(k+i-2,k-1),$$
together with their initial conditions $S(i+1,1)= 1$ for all $i\geq 0$ and $S(k-1,k-1)=1$ for all $k \geq 2$.
\end{proof}

We now turn our attention to considering $A_{k,2i+1}$.

\begin{thm}\label{thm 1.6}
For all $k\geq 2$ and $i\geq 1$, we have
\begin{align}
A_{k,2i+1}=\sum_{j=2}^{k}A_{j,2i}.
\end{align}
\end{thm}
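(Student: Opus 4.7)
The plan is to prove the identity by a classification argument that uses Lemma \ref{132} twice: once to describe how a length-$(2i+1)$ word extends a length-$2i$ word, and once to rewrite the resulting sum as a union of $\underline{132}$-avoiding word sets over smaller alphabets.

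First I would write any $w \in S^{\underline{132}}_{k,2i+1}$ as $w = w'b_{i+1}$ where $w' = b_1 t_1 \cdots b_i t_i \in S^{\underline{132}}_{k,2i}$. By Lemma \ref{132}, $w$ is $\underline{132}$-avoiding iff $b_1 \geq \cdots \geq b_i \geq b_{i+1}$, and the up-down condition requires $b_{i+1} < t_i$. The second constraint is automatic from $b_{i+1} \leq b_i < t_i$, so the admissible extensions of a fixed $w'$ are exactly the values $b_{i+1} \in \{1,2,\ldots,b_i\}$. This gives
\begin{equation*}
A_{k,2i+1} \;=\; \sum_{w' \in S^{\underline{132}}_{k,2i}} b_i(w').
\end{equation*}

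Next I would convert this into a sum of cardinalities by swapping the order of summation,
\begin{equation*}
\sum_{w' \in S^{\underline{132}}_{k,2i}} b_i(w') \;=\; \sum_{c=1}^{k-1} \bigl|\{w' \in S^{\underline{132}}_{k,2i} : b_i(w') \geq c\}\bigr|,
\end{equation*}
using that $b_i(w') \leq k-1$. Because the bottom elements are weakly decreasing (Lemma \ref{132}), the condition $b_i(w') \geq c$ is equivalent to every letter of $w'$ being $\geq c$. Shifting by $c-1$ gives a bijection between $\{w' \in S^{\underline{132}}_{k,2i} : b_i(w') \geq c\}$ and $S^{\underline{132}}_{k-c+1,2i}$, since subtracting a constant preserves both the up-down property and the relative orders needed for $\underline{132}$-avoidance. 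Therefore the inner cardinality equals $A_{k-c+1,2i}$.

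Finally, a change of variable $j = k-c+1$ yields
\begin{equation*}
A_{k,2i+1} \;=\; \sum_{c=1}^{k-1} A_{k-c+1,2i} \;=\; \sum_{j=2}^{k} A_{j,2i},
\end{equation*}
which is the claimed identity. No step presents a serious obstacle; the only thing to be careful about is the bookkeeping of the range $c \in \{1,\ldots,k-1\}$ (which reflects $b_i \leq k-1$) and checking that the shift map is well defined on $\underline{132}$-avoiding up-down words, both of which follow immediately from Lemma \ref{132}.
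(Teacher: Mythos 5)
Your proof is correct and is essentially the paper's argument in a slightly different bookkeeping: the paper sums over the value $j$ of the last letter and uses the same shift-by-a-constant bijection plus the append/remove-final-letter correspondence, while you sum the number of admissible extensions $b_i(w')$ over prefixes and then convert that sum into layers $\{b_i \geq c\}$, which are exactly the paper's classes. Both hinge on Lemma~\ref{132} in the same two places, so there is nothing substantively new or missing here.
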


\begin{proof}
Let $A_{k, \ell}^j$ denote the number of those words counted by $A_{k,\ell}$ that end with $j$.
It is easy to see that for $k\geq 2$ and  $i\geq1$,
\begin{align*}
 A_{k,2i+1}=\sum_{j=1}^{k-1} A_{k,2i+1}^j.
\end{align*}
By Lemma \ref{132}, for any word $w \in S^{\underline{132}}_{k,2i+1}$ whose last letter is $j$, the minimum letter of $w$ is also $j$. Thus, we have that $$A_{k,2i+1}^j=A_{k-j+1,2i+1}^1,$$ where $1\leq j\leq k-1$, because we can subtract $j$ from each letter of any word counted by $A_{k,2i+1}^j$. Moreover, for any word in $S^{\underline{132}}_{k-j+1,2i+1}$ ending with $1$, we can remove $1$ to form a word of length $2i$, which is also \underline{132}-avoiding. On the other hand, for any word  $S^{\underline{132}}_{k-j+1,2i}$, we can adjoin the letter $1$ at the end to form a $\underline{132}$-avoiding word of length $2i+1$. Thus, $$A_{k-j+1,2i+1}^1=A_{k-j+1,2i}.$$
So,  we obtain that
$$A_{k,2i+1}=\sum_{j=1}^{k-1}A_{k-j+1,2i}=\sum_{j=2}^{k}A_{j,2i}.$$
\end{proof}

\begin{thm} For $k \geq 2$, let $N_k^{\underline{132}}(x)=\sum_{\ell\geq 0}A_{k,\ell}x^{\ell}$ be the generating function for $N_{k,\ell}^{\underline{132}}$. Then we have
$$N_k^{\underline{132}}(x)=\sum_{j=1}^{k}\frac{x+\delta_{j,k}}{(1-x^2)(1-2x^2)\cdots(1-(j-1)x^2)}.$$
\end{thm}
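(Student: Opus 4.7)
The plan is to split the generating function into its even-degree and odd-degree parts and solve for each using the recurrences obtained in the proofs of Theorems~\ref{thm 1.4} and~\ref{thm 1.6}. Set
$$E_k(x) = \sum_{i\ge 0} A_{k,2i}\, x^{2i}, \qquad O_k(x) = \sum_{i\ge 0} A_{k,2i+1}\, x^{2i+1},$$
so that $N_k^{\underline{132}}(x) = E_k(x) + O_k(x)$.

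For the even part, I would apply the recurrence $A_{k,2i} = A_{k-1,2i} + (k-1)A_{k,2i-2}$ (valid for $k\ge 3$, $i\ge 1$) from the proof of Theorem~\ref{thm 1.4}. Using $A_{k,0}=A_{k-1,0}=1$, this yields
$$E_k(x) = \frac{E_{k-1}(x)}{1-(k-1)x^2},$$
and since $A_{2,2i}=1$ gives $E_2(x) = \frac{1}{1-x^2}$, a telescoping iteration yields the closed form
$$E_k(x) = \frac{1}{(1-x^2)(1-2x^2)\cdots (1-(k-1)x^2)}, \qquad k\ge 2.$$

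For the odd part, I would use $A_{k,1}=k$ (every length-one word over $[k]$ trivially avoids the length-three pattern) together with Theorem~\ref{thm 1.6}, $A_{k,2i+1} = \sum_{j=2}^{k} A_{j,2i}$ for $i\ge 1$. Separating the $i=0$ term, this gives
$$O_k(x) = kx + x\sum_{j=2}^{k}\bigl(E_j(x)-1\bigr) = x + x\sum_{j=2}^{k} E_j(x).$$
Since the empty product equals $1$, we may write $x = x/\!\prod_{m=1}^{0}(1-mx^2)$ and absorb it as the $j=1$ term, obtaining $O_k(x) = x\sum_{j=1}^{k} \frac{1}{\prod_{m=1}^{j-1}(1-mx^2)}$.

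Finally, adding $E_k(x)$, which contributes the term $\frac{1}{\prod_{m=1}^{k-1}(1-mx^2)}$ corresponding to $j=k$, gives
\begin{align*}
N_k^{\underline{132}}(x)
&= E_k(x) + x\sum_{j=1}^{k} \frac{1}{\prod_{m=1}^{j-1}(1-mx^2)} \\
&= \sum_{j=1}^{k}\frac{x+\delta_{j,k}}{(1-x^2)(1-2x^2)\cdots(1-(j-1)x^2)},
\end{align*}
as desired. This is essentially bookkeeping, so the only real obstacle is handling the boundary case $i=0$ of Theorem~\ref{thm 1.6} correctly (since $A_{k,1}=k$ rather than $k-1$); this is what produces the extra $x$ that combines with the sum and ultimately explains why the Kronecker $\delta_{j,k}$ appears only in the $j=k$ summand.
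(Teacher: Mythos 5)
Your proposal is correct and follows essentially the same route as the paper: both split the generating function into even and odd parts, solve the even part via the recurrence $A_{k,2i}=A_{k-1,2i}+(k-1)A_{k,2i-2}$ to get the product formula (the paper's $A_k(x^2)$ is your $E_k(x)$), and then apply Theorem~\ref{thm 1.6} together with the boundary value $A_{k,1}=k$ to produce the extra $x$ that becomes the $j=1$ summand. The only difference is cosmetic bookkeeping in how the $i=0$ correction is recorded.
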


\begin{proof}
Let $$A_k(x)=\sum_{i\geq 0}A_{k,2i}x^i.$$
By (\ref{rec}), it follows that
\begin{align*}
 A_k(x)& =\sum_{i\geq 0}A_{k,2i}x^i\\
 &=1+\sum_{i\geq1}A_{k-1,2i}x^i+(k-1)\sum_{i\geq 1}A_{k,2i-2}x^i\\
 &=A_{k-1}(x)+(k-1)xA_k(x)
\end{align*}
for $k\geq 2$ and $A_1(x)=1$.
This leads to the following well-known generating function for Stirling numbers of the second kind, where $k\geq 1$:
\begin{align*}
A_k(x)&=\frac{1}{(1-x)(1-2x)\cdots(1-(k-1)x)}.
\end{align*}

From the definition of $N_k^{\underline{132}}(x)$ as well as the fact $A_{k,1} = k$, we have that
\begin{align*}
 N_k^{\underline{132}}(x)& =\sum_{\ell\geq 0}A_{k,\ell}x^{\ell}\\
 &=\sum_{i\geq0}A_{k,2i}x^{2i}+\sum_{i\geq 0}A_{k,2i+1}x^{2i+1}\\
 &=A_{k}(x^2)+\sum_{i\geq 0}\sum_{j=2}^{k}A_{j,2i}x^{2i+1}+x\\
 &=A_{k}(x^2)+x\sum_{j=2}^{k}\sum_{i\geq 0}A_{j,2i}x^{2i}+x\\
 &=A_{k}(x^2)+x\sum_{j=2}^{k}A_{j}(x^2)+x\\
 &=\sum_{j=1}^{k}\frac{x+\delta_{j,k}}{(1-x^2)(1-2x^2)\cdots(1-(j-1)x^2)},
\end{align*}
as desired. This completes the proof.
\end{proof}


\subsection{\texorpdfstring{$\underline{312}$}{Lg}-avoiding up-down words}\label{sec-3-2}

In this subsection, we consider the enumeration of $\underline{312}$-avoiding up-down words, which is similar  to the enumeration of $\underline{132}$-avoiding up-down words done in Section~\ref{sub-cons-132}.
Table~\ref{tab2} provides the numbers $N_{k,\ell}^{\underline{312}}$ for small values of $k$ and $\ell$.

\begin{table}[!htb]
\centering
    \begin{minipage}{.5\textwidth}
        \centering
    \begin{tabular}{|c|lllll|l|}
\hline
\diagbox{$k$}{$\ell$}& 0 & 2  & 4  &6 &8\\
\hline
2&  1& 1& 1&1&1\\
\hline
3&  1&3& 6&  12&   24\\
\hline
4&  1&  6& 20&  65& 206\\
\hline
5&  1&  10&   50&  238& 1080\\
\hline
\end{tabular}
    \end{minipage}%
    \begin{minipage}{0.5\textwidth}
        \centering
\begin{tabular}{|c|lllll|l|}
\hline
\diagbox{$k$}{$\ell$}&  1  & 3  &5 & 7&9\\
\hline
2&   2& 1& 1& 1&1\\
\hline
3&   3& 5& 11& 23&47\\
\hline
4&    4&  14&  53&  182&593\\
\hline
5&   5& 30&  173&874&4089\\
\hline
\end{tabular}
    \end{minipage}
  \caption{$N_{k,\ell}^{\underline{312}}$ for small values of $k$ and~$\ell$.}
\label{tab2}
\end{table}

We begin with giving a description of $\underline{312}$-avoiding up-down words.
\begin{lem}\label{312}
An up-down word $w=w_1w_2\cdots w_{\ell}$
 is $\underline{312}$-avoiding if and only if the top elements of $w$ are weakly increasing from left to right, i.e.,
$$t_1\leq t_2\leq\dots \leq t_{\lfloor\frac{\ell}{2}\rfloor}.$$
\end{lem}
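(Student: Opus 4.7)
The plan is to mirror the proof of Lemma \ref{132}, exploiting the rigid structure imposed by the up-down condition on any three consecutive letters.

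First I would classify the two possible shapes of three consecutive letters $w_j w_{j+1} w_{j+2}$ in an up-down word. If $j$ is odd, the triple is $b_m t_m b_{m+1}$, so $w_j < w_{j+1}$ and hence $w_j$ is not the maximum of the triple, ruling out an occurrence of $\underline{312}$ at this position. If $j$ is even, the triple is $t_m b_{m+1} t_{m+1}$ with $w_{j+1} < w_j$ and $w_{j+1} < w_{j+2}$, so $w_{j+1}$ is automatically the minimum of the triple. Hence the only way such a triple realises the pattern $\underline{312}$ is when additionally $t_{m+1} < t_m$, i.e., a strict descent between two consecutive top elements.

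For the forward direction, suppose $w$ is $\underline{312}$-avoiding. By the observation above, no descent $t_m > t_{m+1}$ can occur for $1 \le m \le \lfloor \ell/2 \rfloor - 1$, hence $t_1 \le t_2 \le \cdots \le t_{\lfloor \ell/2 \rfloor}$. For the converse, suppose the top elements are weakly increasing. Any potential occurrence of $\underline{312}$ must sit at an even position $j$ (by the first case of the classification), and would then force $t_m > t_{m+1}$ for the corresponding $m$, contradicting the hypothesis.

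There is no real obstacle here; the only thing to be careful about is correctly matching the index $m$ to the position $j$ and verifying that both ``even $j$'' and ``odd $j$'' cases are covered, as well as checking the boundary (e.g.\ that $t_{m+1}$ exists exactly when $j+2 \le \ell$, which is automatic when $j$ is even and $j+2 \le \ell$). The argument is essentially a one-line reduction, analogous to Lemma \ref{132}, but through tops rather than bottoms.
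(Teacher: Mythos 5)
Your proof is correct and follows essentially the same route as the paper: the forward direction produces the occurrence $t_m b_{m+1} t_{m+1}$ from any descent among the tops, and the converse observes that any occurrence of $\underline{312}$ must consist of a top, a bottom, and a top, forcing a descent $t_m > t_{m+1}$. Your explicit case split on the parity of the starting position is just a slightly more verbose packaging of the paper's observation that the middle letter of an occurrence must be a bottom element.
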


\begin{proof}
For any up-down word $w$, if there exists $1\leq j\leq \lfloor\frac{\ell}{2}\rfloor-1$ such that $t_{j} > t_{j+1}$, then $t_jb_{j+1}t_{j+1}$ would be an occurrence of the pattern $\underline{312}$.

Conversely, if there is an occurrence $w_jw_{j+1}w_{j+2}$ of the pattern $\underline{312}$ in $w$, where $1\leq j\leq \ell-2$, we would have $w_{j+1}< w_{j+2} < w_j$. By definition of up-down words, $w_{j+1}$ must be a bottom element in $w$, and $w_j$ and $w_{j+2}$ must be top elements in $w$. But then $w_j> w_{j+2}$. \end{proof}

For $\ell\geq 2$, let $B_{k,\ell}=N^{\underline{312}}_{k,\ell}$ denote the number of $\underline{312}$-avoiding up-down words  of length $\ell$ over an alphabet $[k]$. Also, let $B_{k,0}=1$ and to simplify our calculations, we assume that $B_{k,1}=k-1$.

First, we deal with the enumeration of $B_{k,2i+1}$.

\begin{thm}\label{thm 1.10}
For $k\geq2$ and $i\geq 1$, the numbers  $B_{k,2i+1}$ satisfy the recurrence relation
\begin{align}\label{rec-rel-for-B}
B_{k,2i+1}
=B_{k-1, 2i+1}+(k-1)B_{k,2i-1}
\end{align}
with the initial  conditions $B_{1,2i+1}= 0$ for all $i\geq1$ and $B_{k,1}=k-1$ for all $k \geq 2$.
Furthermore, if $$B_k(x)=\sum_{i\geq 0}B_{k,2i+1}x^i,$$ for $k\geq 1$, then $$B_k(x)=\sum_{j=1}^{k-1}\frac{1}{(1-jx)\cdots(1-(k-1)x)}.$$
\end{thm}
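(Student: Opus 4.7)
The plan is to mirror the argument used for $A_{k,2i}$ in Theorem~\ref{thm 1.4}, now using Lemma~\ref{312} (monotonicity of the top elements from left to right) in place of Lemma~\ref{132}. First, I would establish the recurrence \eqref{rec-rel-for-B} by partitioning the words counted by $B_{k,2i+1}$ according to whether $k$ appears. Words in which $k$ does not appear are precisely $\underline{312}$-avoiding up-down words of length $2i+1$ over $[k-1]$, contributing $B_{k-1,2i+1}$. If $k$ does appear then, being the maximum, it must be a top element, and by Lemma~\ref{312} together with $t_j\le k$ for all $j$, the last top $t_i$ equals $k$. Such a word thus factors as $w'\cdot k\cdot b_{i+1}$ where $w'=b_1t_1\cdots b_i$ is an up-down word of length $2i-1$ and $b_{i+1}\in[k-1]$.

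Two verifications are required: that $w'$ can be an arbitrary $\underline{312}$-avoiding up-down word of length $2i-1$ over $[k]$, and that appending $k\,b_{i+1}$ creates no new occurrence of $\underline{312}$. The only new consecutive triples are $t_{i-1}b_ik$ and $b_ikb_{i+1}$, and in each $k$ is the maximum letter so it cannot play the role of the ``middle'' or ``smallest'' entry required by $\underline{312}$. With the convention $B_{k,1}=k-1$ --- which exactly matches the case $i=1$, where the ``prefix'' $b_1$ must lie in $[k-1]$ to satisfy $b_1<t_1=k$ --- this gives $(k-1)B_{k,2i-1}$ words. The initial conditions $B_{1,2i+1}=0$ (there is no up-down word of length $\geq 2$ over a single letter) and $B_{k,1}=k-1$ (by convention) are immediate.

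For the generating function part, I would translate the recurrence into a functional equation for $B_k(x)$. Multiplying \eqref{rec-rel-for-B} by $x^i$ and summing over $i\ge 1$, while separating off the $i=0$ constant term $B_{k,1}=k-1$, a short bookkeeping yields
\[
(1-(k-1)x)\,B_k(x) \;=\; 1 + B_{k-1}(x),
\]
with base case $B_1(x)=0$. From the induction hypothesis $B_{k-1}(x)=\sum_{j=1}^{k-2}\prod_{\ell=j}^{k-2}(1-\ell x)^{-1}$, dividing by $1-(k-1)x$ and isolating the new $j=k-1$ term produces
\[
B_k(x) \;=\; \frac{1}{1-(k-1)x}+\sum_{j=1}^{k-2}\frac{1}{(1-jx)\cdots(1-(k-1)x)} \;=\; \sum_{j=1}^{k-1}\frac{1}{(1-jx)\cdots(1-(k-1)x)},
\]
as desired.

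The main obstacle I anticipate is not the combinatorics but the bookkeeping on the generating function side: one must carefully track the role of the non-standard value $B_{k,1}=k-1$ in the sum $B_k(x)=(k-1)+\sum_{i\ge 1}B_{k,2i+1}x^i$, ensuring that the constant terms on both sides of the functional equation match (the ``$1$'' on the right comes from $B_{k-1,1}-B_{k,1}\cdot(\text{adjustment from the recurrence})$), and to verify the base case $B_1(x)=0$ is consistent with $B_2(x)=(1-x)^{-1}$ as predicted by the claimed formula and by the recurrence $B_{2,2i+1}=1$.
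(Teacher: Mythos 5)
Your proposal is correct and follows essentially the same route as the paper: the paper derives the recurrence by splitting on whether $k$ appears in $w$ (it omits the details, noting the argument mirrors that for $A_{k,2i}$, and you have correctly supplied exactly those details, including the role of the convention $B_{k,1}=k-1$), and then obtains the same functional equation $(1-(k-1)x)B_k(x)=1+B_{k-1}(x)$ with $B_1(x)=0$ and iterates it to the closed form.
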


\begin{proof}
Our proof of (\ref{rec-rel-for-B}) is similar to the proof of (\ref{rec}) considering subclasses of whether $k$ appears in $w$ or not, and we omit it.

By (\ref{rec-rel-for-B}), we have
\begin{align*}
 B_k(x)& =\sum_{i\geq 0}B_{k,2i+1}x^i\\
 &=k-1+\sum_{i\geq1}B_{k-1,2i+1}x^i+(k-1)\sum_{i\geq 1}B_{k,2i-1}x^i\\
 &=1+B_{k-1}(x)+(k-1)xB_k(x)
\end{align*}
for $k\geq 2$.
Therefore, $$B_k(x)=\frac{B_{k-1}(x)+1}{1-(k-1)x}$$
with the initial condition $B_1(x)=0$.

Hence, for $k\geq 2$, we have
\begin{align*}
 B_k(x)& =\frac{1}{(1-x)(1-2x)\cdots(1-(k-1)x)}+\frac{1}{(1-2x)\cdots(1-(k-1)x)}+\cdots+
 \frac{1}{1-(k-1)x}\\
 &=\sum_{j=1}^{k-1}\frac{1}{(1-jx)\cdots(1-(k-1)x)},
\end{align*}
which completes the proof.
\end{proof}

Now we turn our attention to the words of even length.
\begin{thm}\label{thm 1.11}
For all $k\geq 2$ and $i\geq 2$, we have
 $$B_{k,2i}=\sum_{j=2}^{k}B_{j,2i-1}.$$
\end{thm}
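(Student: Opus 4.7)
The plan is to mimic the strategy used in the proof of Theorem~\ref{thm 1.6} (the $\underline{132}$-analogue), but refining the last letter in a word of even length instead of odd length. In a $\underline{312}$-avoiding up-down word of even length $2i$, the last letter is the top element $t_i$, and by Lemma~\ref{312} the top elements are weakly increasing, so $t_i$ is the \emph{maximum} top element. This mirrors perfectly the situation in Theorem~\ref{thm 1.6}, where Lemma~\ref{132} made the last (bottom) letter of an odd-length $\underline{132}$-avoider the minimum of the word.

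Concretely, I will let $B_{k,2i}^{j}$ denote the number of words counted by $B_{k,2i}$ whose last letter is $j$. Because the last letter is a top element (so at least $2$) and at most $k$, we have
$$B_{k,2i}=\sum_{j=2}^{k}B_{k,2i}^{j}.$$
The heart of the argument is the identity $B_{k,2i}^{j}=B_{j,2i-1}$, which I will establish via the obvious ``delete the last letter / append $j$'' bijection between $\{w\in S^{\underline{312}}_{k,2i}: w \text{ ends in } j\}$ and $S^{\underline{312}}_{j,2i-1}$.

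For the forward direction, given $w=b_1t_1\cdots b_it_i$ with $t_i=j$, removing $t_i$ yields a word $w'$ of length $2i-1$ that is automatically up-down and $\underline{312}$-avoiding (as a prefix of $w$). By Lemma~\ref{312}, $t_1\le\cdots\le t_{i-1}\le t_i=j$, and every bottom element is strictly less than some adjacent top, hence $\le j-1$; thus $w'$ is a word over $[j]$. For the backward direction, given $w'\in S^{\underline{312}}_{j,2i-1}$, its last letter is a bottom element $b_i\le j-1<j$, so appending $j$ produces an up-down word $w$ of length $2i$ over $[k]$. The new top element $t_i=j$ is $\ge t_{i-1}$, so the top elements of $w$ remain weakly increasing, and thus $w$ is $\underline{312}$-avoiding by Lemma~\ref{312}; one can also check directly that no new consecutive $\underline{312}$ occurrence can be created at positions $2i-2,2i-1,2i$ because that would require $t_{i-1}>j$. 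The two maps are mutually inverse.

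The only real obstacle is the careful checking that the ``append $j$'' operation does not introduce a $\underline{312}$ occurrence and that the last bottom element of $w'$ is strictly less than $j$; both follow cleanly from Lemma~\ref{312}, so the argument is essentially a direct translation of the Theorem~\ref{thm 1.6} proof. Summing over $j$ from $2$ to $k$ then gives
$$B_{k,2i}=\sum_{j=2}^{k}B_{k,2i}^{j}=\sum_{j=2}^{k}B_{j,2i-1},$$
as desired.
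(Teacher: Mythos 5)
Your proof is correct and takes essentially the same route as the paper's: refine by the last letter $j$, note via Lemma~\ref{312} that $j$ is then the maximum letter so the word lives over $[j]$, and use the delete-last-letter/append-$j$ bijection with $S^{\underline{312}}_{j,2i-1}$. The only cosmetic difference is that the paper factors the identity as $B_{k,2i}^{j}=B_{j,2i}^{j}=B_{j,2i-1}$, while you combine the two steps into one bijection with somewhat more detailed verification.
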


\begin{proof}
Let $B_{k, \ell}^j$ denote the number of those words counted by $B_{k,\ell}$ that end with $j$ for $\ell\geq2$. It is easy to see that for $k\geq 2$ and $i\geq 2$,
\begin{align*}
 B_{k,2i}=\sum_{j=2}^{k}B_{k,2i}^j.
\end{align*}
For any word $w \in S^{\underline{312}}_{k,2i}$ whose last letter is $j$, by Lemma~\ref{312}, the maximum letter of $w$ is also $j$. Thus, for $2\leq j\leq k$, we have that $$B_{k,2i}^j=B_{j,2i}^j.$$

Moreover, for any word in $S^{\underline{312}}_{j,2i}$ ending with $j$, we can remove $j$ to form a word of length $2i-1$, which is also \underline{312}-avoiding. On the other hand, for any word in $S^{\underline{312}}_{j,2i-1}$, we can adjoin a letter $j$ at the end to form a $\underline{312}$-avoiding word of length $2i$. Thus, $$B_{j,2i}^j=B_{j,2i-1}.$$
So,  we obtain that
\begin{align*}
 B_{k,2i}& =\sum_{j=2}^{k}B_{j,2i-1},
\end{align*}
which completes the proof.
\end{proof}

\begin{prop}For $k \geq 2$, let $N_k^{\underline{312}}(x)=x+\sum_{\ell\geq 0}B_{k,\ell}x^{\ell}$ be the generating function for $N_{k,\ell}^{\underline{312}}$. Then
$$N_k^{\underline{312}}(x)=1+x+\sum_{j=2}^{k}\sum_{i=1}^{j-1}\frac{x^2+x
\delta_{j,k}}{(1-ix^2)\cdots(1-(j-1)x^2)}.$$
\end{prop}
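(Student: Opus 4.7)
The plan is to split $N_k^{\underline{312}}(x)$ into its even-degree and odd-degree parts, apply Theorems~\ref{thm 1.10} and~\ref{thm 1.11} to each piece, and substitute the closed form for $B_k(x)$ supplied by Theorem~\ref{thm 1.10}.

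First, I would verify that the identity $B_{k,2i}=\sum_{j=2}^{k}B_{j,2i-1}$ of Theorem~\ref{thm 1.11} actually extends to $i=1$ under the convention $B_{j,1}=j-1$. This amounts to checking that $B_{k,2}=\binom{k}{2}=\sum_{j=2}^{k}(j-1)$, which is clear since a $\underline{312}$-avoiding up-down word of length $2$ over $[k]$ is simply a pair $b_1t_1$ with $b_1<t_1$. With this extension in hand, I would write
\begin{align*}
 N_k^{\underline{312}}(x) &= x+\sum_{i\geq 0}B_{k,2i}x^{2i}+\sum_{i\geq 0}B_{k,2i+1}x^{2i+1}\\
 &= x+1+\sum_{i\geq 1}\sum_{j=2}^{k}B_{j,2i-1}x^{2i}+xB_k(x^2)\\
 &= 1+x+x^2\sum_{j=2}^{k}B_j(x^2)+xB_k(x^2),
\end{align*}
where the reindexing $m=i-1$ turns $\sum_{i\geq 1}B_{j,2i-1}x^{2i-1}$ into $xB_j(x^2)$.

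Next, I would insert the explicit formula $B_j(x)=\sum_{i=1}^{j-1}\frac{1}{(1-ix)\cdots(1-(j-1)x)}$ from Theorem~\ref{thm 1.10}, giving
\begin{align*}
 x^2\sum_{j=2}^{k}B_j(x^2) &= \sum_{j=2}^{k}\sum_{i=1}^{j-1}\frac{x^2}{(1-ix^2)\cdots(1-(j-1)x^2)},\\
 xB_k(x^2) &= \sum_{i=1}^{k-1}\frac{x}{(1-ix^2)\cdots(1-(k-1)x^2)}.
\end{align*}
The second sum is exactly the $j=k$ slice of the double sum with numerator $x$ instead of $x^2$. Absorbing it into the double sum using the Kronecker delta produces the claimed
$$N_k^{\underline{312}}(x)=1+x+\sum_{j=2}^{k}\sum_{i=1}^{j-1}\frac{x^2+x\delta_{j,k}}{(1-ix^2)\cdots(1-(j-1)x^2)},$$
which completes the argument.

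This proposition is essentially a direct corollary of Theorems~\ref{thm 1.10} and~\ref{thm 1.11}, so I do not anticipate any serious obstacle. The only mild subtlety is the base case $i=1$ of the even-length recurrence, which requires the convention $B_{k,1}=k-1$; the extra $x$ in the definition of $N_k^{\underline{312}}(x)$ is the price paid for this convention, and it is precisely the correction $1+x$ (rather than $1$) that appears outside the double sum.
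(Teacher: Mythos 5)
Your proposal is correct and follows essentially the same route as the paper's proof: split into even- and odd-degree parts, use the extension of Theorem~\ref{thm 1.11} to $i=1$ via $B_{k,2}=\sum_{j=2}^{k}B_{j,1}=\binom{k}{2}$, and substitute the product formula for $B_j(x)$, merging the $xB_k(x^2)$ term into the double sum with the Kronecker delta. Your explicit justification of the $i=1$ base case and of the role of the extra $x$ is a slightly more careful write-up of exactly the same argument.
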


\begin{proof}
From Theorem \ref{thm 1.11} together with the fact $B_{k,2}=\sum_{j=2}^{k}B_{j,1}=\binom{k}{2}$, we obtain that
\begin{align*}
 N_k^{\underline{312}}(x)&=x+\sum_{\ell\geq 0}B_{k,\ell}x^{\ell}\\
 &=x+\sum_{i\geq0}B_{k,2i}x^{2i}+\sum_{i\geq 0}B_{k,2i+1}x^{2i+1}\\
 &=1+x+\sum_{i\geq 1}\sum_{j=2}^{k}B_{j,2i-1}x^{2i}+xB_{k}(x^2)\\
 &=1+x+x^2\sum_{j=2}^{k}B_{j}(x^2)+xB_{k}(x^2)\\
 &=1+x+\sum_{j=2}^{k}\sum_{i=1}^{j-1}\frac{x^2+x\delta_{j,k}}{(1-ix^2)\cdots(1-(j-1)x^2)}.
\end{align*}
This completes the proof.
\end{proof}

%
%

\subsection{\texorpdfstring{$\underline{213}$}{Lg}-avoiding or \texorpdfstring{$\underline{231}$}{Lg}-avoiding up-down words}

In what follows, we resume using $N_{k,\ell}^p$ for the number of $p$-avoiding up-down words of length $\ell$ over an alphabet $[k]$.

\begin{thm} For all $k\geq 2$ and $i\geq 0$, we have
$$N_{k,2i+1}^{\underline{213}}=N_{k,2i+1}^{\underline{312}}$$
and
$$N_{k,2i+1}^{\underline{231}}=N_{k,2i+1}^{\underline{132}}.$$
\end{thm}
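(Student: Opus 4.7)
The plan is to use the reverse map $w \mapsto w^r = w_{2i+1}w_{2i}\cdots w_1$, which I claim restricts to a bijection on the set of up-down words of odd length over $[k]$. To verify this, observe that the sequence of consecutive comparisons of an up-down word of length $2i+1$ is the alternating sequence $<,>,<,>,\ldots,<,>$ of even length $2i$. After reversal, the comparison at position $j$ of $w^r$ is the opposite of the comparison at position $2i+1-j$ of $w$; applied to the above alternating sequence, this operation returns the same sequence $<,>,<,>,\ldots,<,>$, so $w^r$ is again up-down. The alphabet $[k]$ is unchanged and the map is an involution. (For an even-length up-down word the analogous calculation produces a down-up word instead, which is precisely why the statement is restricted to odd lengths.)

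It remains to track how reversal interacts with consecutive pattern avoidance. A consecutive window $w_j w_{j+1} w_{j+2}$ is order-isomorphic to a length-3 pattern $\tau_1\tau_2\tau_3$ if and only if the corresponding window $w_{j+2}w_{j+1}w_j$ in $w^r$ is order-isomorphic to the reverse pattern $\tau_3\tau_2\tau_1$. Therefore $w$ avoids the consecutive pattern $\underline{\tau_1\tau_2\tau_3}$ if and only if $w^r$ avoids $\underline{\tau_3\tau_2\tau_1}$, and reversal gives a bijection between the corresponding avoidance classes inside the set of up-down words of length $2i+1$ over $[k]$. Noting that the reverse of the pattern $213$ is $312$ and the reverse of $231$ is $132$, this immediately yields both claimed identities.

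The argument contains no real obstacle; the only point requiring care is the parity check in the first paragraph, which shows that the reverse of an up-down word remains up-down precisely when its length is odd, so that reversal actually maps the relevant word sets to each other.
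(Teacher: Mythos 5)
Your proposal is correct and follows exactly the paper's approach: the paper also proves both identities by applying the reverse operation, noting that it preserves the up-down property for odd-length words. Your write-up simply spells out the parity check and the effect of reversal on consecutive patterns ($213\mapsto 312$, $231\mapsto 132$), which the paper leaves implicit.
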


\begin{proof}
The equalities hold by applying the reverse operation to all words, which keeps the property of being an up-down word.
\end{proof}

For the case of the even lengths, we have the following result.
\begin{thm}
For all $k\geq 2$ and $i\geq 1$, there is
$$N_{k,2i}^{\underline{213}}=N_{k,2i}^{\underline{132}}$$
and
$$N_{k,2i}^{\underline{231}}=N_{k,2i}^{\underline{312}}.$$
\end{thm}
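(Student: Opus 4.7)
The plan is to prove both Wilf-equivalences simultaneously via a single symmetry, the reverse-complement map $\phi\colon w\mapsto (w^r)^c$. The reason the preceding (odd-length) theorem used reverse alone is that reversing an up-down word of odd length again yields an up-down word; for even length this fails, since the sequence of comparisons $<,>,<,\ldots,>,<$ of an up-down word $w=w_1w_2\cdots w_{2i}$ reverses to $>,<,>,\ldots,<,>$, making $w^r$ down-up. I therefore compose with the complement, which inverts every comparison once more and so sends a down-up word back to an up-down word of the same length over the same alphabet.

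The first step is the structural check: $\phi$ is length- and alphabet-preserving by construction, and the two successive comparison-flips cancel, so $\phi$ restricts to an involution on the set of up-down words of length $2i$ over $[k]$. The second step is to track the action of $\phi$ on the four vincular patterns. A direct letter-by-letter computation gives
\begin{align*}
\underline{132} \;\longmapsto\; \underline{231} \;\longmapsto\; \underline{213}, \qquad \underline{312} \;\longmapsto\; \underline{213} \;\longmapsto\; \underline{231},
\end{align*}
where the first arrow in each chain is reverse and the second is complement. The underline (requiring consecutive positions in any occurrence) is respected by both operations, since reverse sends adjacent positions to adjacent positions and complement does not change positions at all.

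The final step packages this into a bijective proof. Because reverse and complement each send occurrences of a pattern $p$ in $w$ bijectively to occurrences of the reversed (respectively complemented) pattern in the transformed word, $\phi$ restricts to bijections
\begin{align*}
S^{\underline{132}}_{k,2i} \;\longleftrightarrow\; S^{\underline{213}}_{k,2i}, \qquad S^{\underline{312}}_{k,2i} \;\longleftrightarrow\; S^{\underline{231}}_{k,2i},
\end{align*}
which yields the two identities in the statement. There is no real obstacle here: the only subtlety, and the point that distinguishes this theorem from its odd-length counterpart, is that one cannot use reverse by itself (it would exit the class of up-down words), and the fix is precisely the composition $\phi$, which is tailor-made to preserve the up-down shape in even length while simultaneously realising the pattern swaps $\underline{132}\leftrightarrow\underline{213}$ and $\underline{312}\leftrightarrow\underline{231}$.
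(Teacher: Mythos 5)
Your proposal is correct and uses exactly the same idea as the paper, which proves this theorem by applying the composition of the reverse and complement operations (the paper's proof is a one-line remark; yours simply spells out the verification that reverse--complement preserves even-length up-down words and swaps $\underline{132}\leftrightarrow\underline{213}$ and $\underline{312}\leftrightarrow\underline{231}$). No substantive difference in approach.
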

\begin{proof}
The statement follows by applying the complement and reverse operations which turn an up-down word into an up-down word.
\end{proof}

\section{Enumeration of up-down words avoiding a vincular pattern of length 3}\label{enum-vinc-pat}

In Section~\ref{three-consecutive-up-down-sec}, we enumerated up-down words avoiding consecutive patterns of length 3, which are a particular case of vincular patterns. In this section, we consider avoidance of other vincular patterns of length 3 on up-down words. We divide patterns of the form $x\underline{yz}$ into three subcases; in each subcase the proofs are similar.

\subsection{\texorpdfstring{$1\underline{32}$}{Lg}-avoiding or \texorpdfstring{$3\underline{12}$}{Lg}-avoiding up-down words}

Similarly to our considerations above, we first give a description of $1\underline{32}$-avoiding up-down words.

\begin{thm}\label{thm-132-av}The following two statements hold:
\begin{itemize}
\item[(a)]An up-down word $w=w_1w_2\cdots w_{\ell}$
 is $1\underline{32}$-avoiding if and only if the bottom elements of $w$ are weakly decreasing from left to right, i.e.,
$$b_1\geq b_2\geq\dots \geq b_{\lceil\frac{\ell}{2}\rceil}.$$
\item[(b)]An up-down word $w$
 is $1\underline{32}$-avoiding if and only if $w$ is $\underline{132}$-avoiding, and thus, for $k\geq 2$ and $\ell \geq 0$, we have
$$N_{k,\ell}^{1\underline{32}}=N_{k,\ell}^{\underline{132}},$$ which is enumerated in Section~\ref{sub-cons-132}.
\end{itemize}
\end{thm}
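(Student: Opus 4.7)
The plan is to prove part (a) by characterizing $1\underline{32}$-occurrences in up-down words combinatorially, and then derive part (b) as an immediate corollary by comparing the resulting characterization with Lemma~\ref{132}.

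For part (a), I would argue both implications via the contrapositive. For the ``only if'' direction, if $b_j < b_{j+1}$ for some consecutive pair of bottom elements, then the triple $(b_j, t_j, b_{j+1})$ is itself an occurrence of $1\underline{32}$: the up-down property gives $b_j < b_{j+1} < t_j$, and the pair $t_j b_{j+1}$ is adjacent in $w$ by construction, so $b_j$ plays the role of the $1$, $t_j$ the role of the $3$, and $b_{j+1}$ the role of the $2$. For the converse, assume the bottoms are weakly decreasing and suppose for contradiction that $w_i w_j w_{j+1}$ is an occurrence of $1\underline{32}$ in $w$. Since $w_j > w_{j+1}$ is a descent and in an up-down word every descent is of the form $(t_p, b_{p+1})$, we must have $w_j = t_p$ and $w_{j+1} = b_{p+1}$ for some index $p$. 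A short case analysis on whether $w_i$ sits in a bottom position $b_q$ or a top position $t_q$ (with $q \leq p$) yields a contradiction with the monotonicity assumption: in the bottom case directly from $b_q < b_{p+1}$, and in the top case via the chain $b_{q+1} < t_q < b_{p+1}$ combined with $q+1 \leq p+1$.

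For part (b), notice first that every occurrence of $\underline{132}$ is automatically an occurrence of $1\underline{32}$, so avoiding $1\underline{32}$ trivially forces avoiding $\underline{132}$. For the converse, compare part (a) with Lemma~\ref{132}: both avoidance properties are equivalent to the same monotonicity condition $b_1 \geq b_2 \geq \cdots \geq b_{\lceil \ell/2 \rceil}$ on the bottom elements. Hence the two avoidance sets coincide on up-down words, from which the enumerative equality $N_{k,\ell}^{1\underline{32}} = N_{k,\ell}^{\underline{132}}$ follows at once.

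I do not foresee a real obstacle here. The key structural observation is that every descent in an up-down word is a $(t_p, b_{p+1})$-pair; once this is used to pin down the last two letters of any $1\underline{32}$-occurrence, the remaining argument reduces to a two-case comparison of indices among bottom elements, which is routine. The conceptual content of the theorem is that, although the $1\underline{32}$ condition is nominally weaker than the $\underline{132}$ condition (the ``$1$'' is allowed to sit anywhere earlier in the word rather than immediately before the ``$3$''), on up-down words the extra freedom buys nothing.
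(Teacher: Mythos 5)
Your proposal is correct and follows essentially the same route as the paper: the forward direction exhibits $b_j t_j b_{j+1}$ as an occurrence of $1\underline{32}$, the converse pins the adjacent descent to a pair $(t_p,b_{p+1})$ and splits into the two cases where the ``1'' is a bottom element $b_q$ or a top element $t_q$ (using $b_{q+1}<t_q<b_{p+1}$ in the latter), and part (b) follows by matching the characterization against Lemma~\ref{132}. No substantive differences from the paper's argument.
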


\proof

\begin{itemize}
\item[(a)] If there were some $j$, $1\leq j\leq \lceil\frac{\ell}{2}\rceil-1$, such that $b_{j} < b_{j+1}$, then $b_jt_jb_{j+1}$ would be an occurrence of the pattern $1\underline{32}$.

Conversely, if in $w$ there is an occurrence $w_{j^{*}}w_{j}w_{j+1}$ of the pattern $1\underline{32}$, where $1\leq j^{*}<j\leq \ell-1$, we would have $w_{j^*}< w_{j+1} < w_{j}$. According to the definition of up-down words, $w_{j}$ must be a top element and $w_{j+1}$ must be a bottom element in $w$.
If $w_{j^*}$ is a bottom element, then there is $w_{j^*}< w_{j+1}$ and the bottom element $w_{j^*}$ is to the left of the bottom element $w_{j+1}$. If $w_{j^*}$ is a top element, then there is $w_{j^*+1}<w_{j^*}< w_{j+1}$, and the bottom element $w_{j^*+1}$ is to the left of the bottom element $w_{j+1}$.

\item[(b)] Combining Lemma~\ref{132} and $(a)$, we get the desired result.
\end{itemize}

\qed

The enumeration of $3\underline{12}$-avoiding up-down words is similar to that of $1\underline{32}$-avoiding up-down words, and we omit a proof of the following theorem leaving it to the interested Reader.

\begin{thm}
The following two statements hold:
\begin{itemize}
\item[(a)] An up-down word $w$ is $3\underline{12}$-avoiding if and only if the top elements of $w$ are weakly increasing from left to right, i.e.,
$$t_1\leq t_2\leq\dots \leq t_{\lfloor\frac{\ell}{2}\rfloor}.$$
\item[(b)] An up-down word $w$ is $3\underline{12}$-avoiding if and only if $w$ is $\underline{312}$-avoiding. Thus, for  all $k\geq 2$ and $\ell \geq 0$, we have
$$N_{k,\ell}^{3\underline{12}}=N_{k,\ell}^{\underline{312}}.$$
\end{itemize}
\end{thm}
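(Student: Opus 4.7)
The plan is to mirror the proof of Theorem~\ref{thm-132-av}, using top elements in place of bottom elements and reversing the inequalities appropriately. I would first establish part (a) by a direct double implication and then deduce part (b) by comparing the characterization in (a) with Lemma~\ref{312}.

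For the forward direction of (a), I would argue by contrapositive: suppose $t_j > t_{j+1}$ for some $j$ with $1\le j\le \lfloor\tfrac{\ell}{2}\rfloor-1$. Then in $w$ the subword $t_j\, b_{j+1}\, t_{j+1}$ satisfies $b_{j+1} < t_{j+1} < t_j$ (the first inequality is the up-down property; the second is our assumption), and $b_{j+1}t_{j+1}$ are adjacent in $w$. Hence this subword is an occurrence of $3\underline{12}$, contradicting $3\underline{12}$-avoidance.

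For the reverse direction of (a), suppose $w_{j^{*}}w_jw_{j+1}$ is an occurrence of $3\underline{12}$ with $j^{*}<j\le \ell-1$, so that $w_j < w_{j+1} < w_{j^{*}}$ and $w_jw_{j+1}$ are adjacent. From the strict up-down inequalities, $w_j<w_{j+1}$ forces $w_j$ to be a bottom element and $w_{j+1}$ to be a top element; that is, $w_{j+1}=t_m$ for some $m$ with $1\le m\le \lfloor\tfrac{\ell}{2}\rfloor$. Now either $w_{j^{*}}$ is itself a top element, say $w_{j^{*}}=t_{m'}$ with $m'<m$, or $w_{j^{*}}$ is a bottom element, in which case the letter $w_{j^{*}+1}$ immediately after it is a top element $t_{m'}$ with $m'<m$ and $t_{m'}>w_{j^{*}}>w_{j+1}=t_m$. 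In either case we produce indices $m'<m$ with $t_{m'}>t_m$, contradicting the assumption that the top elements are weakly increasing. This gives (a).

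Part (b) is then immediate. Any occurrence of $\underline{312}$ in $w$ is also an occurrence of $3\underline{12}$ (loosening the adjacency requirement can only create more occurrences), so $3\underline{12}$-avoidance implies $\underline{312}$-avoidance. Conversely, Lemma~\ref{312} characterizes $\underline{312}$-avoiding up-down words by the condition $t_1\le t_2\le\cdots\le t_{\lfloor\ell/2\rfloor}$, which by part (a) is equivalent to $3\underline{12}$-avoidance. The enumerative equality $N_{k,\ell}^{3\underline{12}}=N_{k,\ell}^{\underline{312}}$ follows. The only step that requires any care is the case analysis in the reverse direction of (a) when $w_{j^{*}}$ happens to be a bottom element, since one must pass to the adjacent top element $w_{j^{*}+1}$; apart from this, the argument is entirely routine.
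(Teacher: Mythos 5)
Your proof is correct and follows exactly the route the paper intends: the paper omits this proof, stating only that it mirrors the argument for Theorem~\ref{thm-132-av}, and your argument is precisely that mirror (tops in place of bottoms, the case split on whether the ``3'' of the occurrence sits in a top or bottom position, and part (b) obtained by comparing with Lemma~\ref{312}). The details check out, including the passage to the adjacent top element $w_{j^{*}+1}$ when $w_{j^{*}}$ is a bottom element.
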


\subsection{\texorpdfstring{$2\underline{31}$}{Lg}-avoiding or \texorpdfstring{$2\underline{13}$}{Lg}-avoiding up-down words}
Our proof of the following lemma is very similar to the proof of Theorem~\ref{thm-132-av} (a), and thus is omitted.

\begin{lem}\label{231**}
In a $2\underline{31}$-avoiding up-down word, the bottom elements are weakly increasing from left to right, i.e.,
$$b_1\leq b_2\leq\dots \leq b_{\lceil\frac{\ell}{2}\rceil}.$$
\end{lem}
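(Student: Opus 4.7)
The plan is to prove the contrapositive: if the bottom elements of an up-down word $w$ are not weakly increasing from left to right, then $w$ contains an occurrence of $2\underline{31}$. So suppose there is an index $j$ with $1\le j\le \lceil\frac{\ell}{2}\rceil-1$ such that $b_j>b_{j+1}$. I will then exhibit an explicit occurrence of $2\underline{31}$ using the three letters $b_j$, $t_j$, $b_{j+1}$.

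First I would verify the adjacency requirement: by the up-down structure $w=b_1t_1b_2t_2\cdots$, the letters $t_j$ and $b_{j+1}$ are consecutive in $w$, which is exactly what the underline in $2\underline{31}$ demands. Next, I would check the order-isomorphism: since $w$ is up-down we have $b_j<t_j$ and $b_{j+1}<t_j$, and by hypothesis $b_{j+1}<b_j$. Chaining these gives $b_{j+1}<b_j<t_j$, so $b_j$ plays the role of $2$, $t_j$ the role of $3$, and $b_{j+1}$ the role of $1$. Positionally $b_j$ occurs before the adjacent pair $t_j b_{j+1}$, yielding a genuine occurrence of $2\underline{31}$ and contradicting the assumed avoidance.

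This is the same style of argument used in the proof of Theorem~\ref{thm-132-av}(a); the only observation to make is that the same triple $b_j t_j b_{j+1}$ which detects a violation of monotonicity automatically satisfies the adjacency constraint because the "$3$" and "$1$" of the pattern land on the consecutive pair $t_j,b_{j+1}$. I do not anticipate any real obstacle: the whole proof should fit in a couple of lines, and there are no edge cases beyond checking that the index $j+1$ exists, which is guaranteed by the range $1\le j\le \lceil\frac{\ell}{2}\rceil-1$.
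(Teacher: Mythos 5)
Your proposal is correct and is precisely the argument the paper has in mind: it omits the proof of Lemma~\ref{231**} on the grounds that it is ``very similar to the proof of Theorem~\ref{thm-132-av}~(a)'', and your triple $b_j t_j b_{j+1}$ witnessing $2\underline{31}$ when $b_j>b_{j+1}$ is exactly that adaptation. You also correctly prove only the one direction stated (the paper explicitly notes the converse fails, e.g.\ for $12131$), so nothing is missing.
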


Note that unlike Theorem~\ref{thm-132-av} (a), we do not have ``if and only if'' statement in Lemma~\ref{231**} as demonstrated, e.g., by the word $12131$.


The following theorem shows that avoidance of the pattern $2\underline{31}$ is equivalent to avoidance of the classical pattern $231$ studied in~\cite{GKZ}.

\begin{thm}\label{2-31-is-eq-231}
An up-down word $w=w_1w_2\cdots w_{\ell}$
 is $2\underline{31}$-avoiding if and only if $w$ is $231$-avoiding.
\end{thm}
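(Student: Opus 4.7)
The plan is to prove the equivalence by showing both directions. The direction ``$231$-avoiding $\Rightarrow$ $2\underline{31}$-avoiding'' is immediate because any occurrence of $2\underline{31}$ is already an occurrence of $231$ (the adjacency between the ``$3$'' and the ``$1$'' is merely an additional restriction). So the real work is in the converse: assume $w$ is $2\underline{31}$-avoiding and show that $w$ is $231$-avoiding. I would argue by contradiction, supposing that $w_{i_1}w_{i_2}w_{i_3}$ is an occurrence of the classical pattern $231$ in $w$, so that $w_{i_3} < w_{i_1} < w_{i_2}$ with $i_1 < i_2 < i_3$. The key structural input is Lemma~\ref{231**}, which guarantees that the bottom elements of $w$ satisfy $b_1 \leq b_2 \leq \cdots$; combined with the up-down property (every top element strictly exceeds its adjacent bottom elements), this will rule out any such witness.

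I would split into two cases according to whether $w_{i_2}$ is a top or a bottom letter of $w$. First, if $w_{i_2}$ is a bottom element, say $w_{i_2} = b_m$, then $w_{i_3} < w_{i_1} < b_m$, and $w_{i_3}$ sits strictly to the right of $b_m$. Any such right-hand letter is either a bottom element $b_{m'}$ with $m' \geq m$, in which case $b_{m'} \geq b_m$ by Lemma~\ref{231**}, or a top element $t_{m'}$ with $m' \geq m$, in which case $t_{m'} > b_{m'} \geq b_m$ by the up-down condition together with Lemma~\ref{231**}. Either way, every letter to the right of $w_{i_2}$ is $\geq b_m$, contradicting $w_{i_3} < b_m$.

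Second, if $w_{i_2}$ is a top element, say $w_{i_2} = t_m$, then since $i_3 > i_2$ the adjacent bottom element $b_{m+1}$ exists. If $b_{m+1} < w_{i_1}$, then $w_{i_1}, t_m, b_{m+1}$ is an occurrence of $2\underline{31}$, contradicting our hypothesis on $w$. Otherwise $b_{m+1} \geq w_{i_1}$, and Lemma~\ref{231**} propagates this to $b_{m'} \geq w_{i_1}$ for all $m' \geq m+1$; the up-down condition then forces $t_{m'} > b_{m'} \geq w_{i_1}$ as well. Hence no letter to the right of $w_{i_2}$ is smaller than $w_{i_1}$, contradicting the existence of $w_{i_3}$. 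The main (and rather mild) obstacle is being careful with the case split and the position-type bookkeeping in the second case; once Lemma~\ref{231**} is combined with the up-down inequalities, no actual computation is needed.
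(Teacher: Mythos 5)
Your proof is correct, and both directions are handled soundly: the forward direction is the same triviality the paper notes, and the converse uses the same key input (Lemma~\ref{231**}) with the same top-level case split on whether the letter playing the role of ``3'' is a top or a bottom element. The execution differs in one respect worth noting: the paper chooses an occurrence $w_{j_1}w_{j_2}w_{j_3}$ of $231$ with $j_3-j_1$ minimal and, in the case where $w_{j_2}$ is a top element, runs a three-way subcase analysis (on whether $j_3=j_2+1$ and on the position type of $w_{j_3}$), using minimality to push $w_{j_3}$ leftward when it is a top element. You avoid the extremal choice entirely: in each case you simply show that \emph{every} letter strictly to the right of $w_{i_2}$ is at least $b_m$ (resp.\ at least $w_{i_1}$), because the bottoms are weakly increasing by Lemma~\ref{231**} and each top dominates its adjacent bottom; this contradicts the existence of the small letter $w_{i_3}$ outright. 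Your version is slightly cleaner in that it needs no minimal counterexample and fewer subcases, while the paper's version localizes the contradiction to an explicit forbidden triple near $w_{j_3}$; the content and the reliance on Lemma~\ref{231**} are otherwise identical.
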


\begin{proof}
If $w$ has an occurrence of the pattern $2\underline{31}$ then it  clearly has an occurrence of the pattern 231. Thus, we just need to show that if $w$
 is $2\underline{31}$-avoiding, then $w$ is $231$-avoiding. Suppose that $w$ is $2\underline{31}$-avoiding, but there is an occurrence $w_{j_1}w_{j_2}w_{j_3}$ of the pattern $231$ in $w$, that is, $j_1<j_2<j_3$  and $w_{j_3}<w_{j_1}<w_{j_2}$. Among all such occurrences, we can pick one which has $j_3-j_1$ minimum possible.
\begin{itemize}

\item[(a)] If $w_{j_2}$ is a bottom element, then $w_{j_3}$ must be a top element by Lemma~\ref{231**}. Since $w_{j_3-1}<w_{j_3}$, we have that $w_{j_2}\neq w_{j_3-1}$. But then, $w_{j_2}$ and $w_{j_3-1}$ are bottom elements such that $w_{j_3-1}$ is to the right of $w_{j_2}$ and $w_{j_3-1}<w_{j_2}$ contradicting Lemma~\ref{231**}.

\item[(b)]If $w_{j_2}$ is a top element, we have the following cases to consider. If $j_3=j_2+1$, then $w_{j_1}w_{j_2}w_{j_3}$ is an occurrence of the pattern $2\underline{31}$, which is impossible. If $j_3\geq j_2+2$ and $w_{j_3}$ is a bottom element, according to the definition of up-down words and Lemma~\ref{231**}, we have $w_{j_2+1}\leq w_{j_3}$ and thus $w_{j_1}w_{j_2}w_{j_2+1}$ is an occurrence of the pattern $2\underline{31}$; contradiction. Finally, if $j_3\geq j_2+2$ and $w_{j_3}$ is a top element, then $w_{j_1}w_{j_2}w_{j_3-1}$ is an occurrence of the pattern $231$ with $w_{j_1}$ and $w_{j_3-1}$ being closer to each other than $w_{j_1}$ and $w_{j_3}$ contradicting our choice of $w_{j_1}w_{j_2}w_{j_3}$.
\end{itemize}
The proof is completed.
\end{proof}

The following statement is a direct corollary to Theorem~\ref{2-31-is-eq-231}.

\begin{cor} \label{cor 2.5}For all $k\geq 2$ and $\ell \geq 0$, we have
$$N_{k,\ell}^{2\underline{31}}=N_{k,\ell}^{231},$$ which is enumerated in Theorem~\ref{thm-from-prev-paper}.
\end{cor}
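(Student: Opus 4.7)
The plan is to observe that Corollary~\ref{cor 2.5} is essentially a packaging result: it simply reinterprets Theorem~\ref{2-31-is-eq-231} at the level of cardinalities and then invokes the already-known enumeration from Theorem~\ref{thm-from-prev-paper}. First, I would note that Theorem~\ref{2-31-is-eq-231} asserts an ``if and only if'' statement, which literally says that on up-down words, the property of avoiding the vincular pattern $2\underline{31}$ coincides with the property of avoiding the classical pattern $231$. Hence the two sets $S^{2\underline{31}}_{k,\ell}$ and $S^{231}_{k,\ell}$ are the same subset of up-down words, and consequently have the same cardinality.

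Second, I would appeal to Theorem~\ref{thm-from-prev-paper} to close the loop: that theorem provides the explicit enumeration $N^{231}_{k,2i}=N_{k+i-1,i}$, and thus the same formula automatically holds for $N^{2\underline{31}}_{k,2i}$. For completeness, I would mention that the odd-length case is handled by the same identification of sets — the formula of Theorem~\ref{thm-from-prev-paper} only covers even length, so for odd length the equality $N_{k,\ell}^{2\underline{31}}=N_{k,\ell}^{231}$ still follows set-theoretically, with the corresponding enumerative content inherited from whatever is known about $231$-avoiding up-down words of odd length in \cite{GKZ}.

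There is no real obstacle here: the hard work has been done in Theorem~\ref{2-31-is-eq-231}, and the corollary is a two-line deduction. In writing it up, the only care needed is to be precise that the equivalence from Theorem~\ref{2-31-is-eq-231} is stated pointwise (for each word $w$), so that translating it to set equality and then to equality of cardinalities is automatic. Thus the proof proposal amounts to:

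\begin{proof}[Proof plan]
By Theorem~\ref{2-31-is-eq-231}, an up-down word $w$ over $[k]$ of length $\ell$ avoids $2\underline{31}$ if and only if it avoids $231$. Therefore $S^{2\underline{31}}_{k,\ell}=S^{231}_{k,\ell}$, and taking cardinalities gives $N_{k,\ell}^{2\underline{31}}=N_{k,\ell}^{231}$. The enumeration of the right-hand side is provided by Theorem~\ref{thm-from-prev-paper}.
\end{proof}
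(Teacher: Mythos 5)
Your proposal is correct and matches the paper exactly: the paper presents Corollary~\ref{cor 2.5} as a direct consequence of Theorem~\ref{2-31-is-eq-231}, with the word-level equivalence giving set equality and hence equality of cardinalities, and the enumeration inherited from Theorem~\ref{thm-from-prev-paper}. No further comment is needed.
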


The enumeration of $2\underline{13}$-avoiding up-down words is similar to that of $2\underline{31}$-avoiding up-down words. Here we list all the results about the former objects, omitting the proofs.

\begin{thm}\label{thm 2.6}
The following two statements hold:
\begin{itemize}
\item[(a)] In an up-down $2\underline{13}$-avoiding word $w$, the top elements  are weakly increasing from left to right, i.e.,
$$t_1\geq t_2\geq\dots \geq t_{\lfloor\frac{\ell}{2}\rfloor}.$$
\item[(b)] An up-down word $w$ is $2\underline{13}$-avoiding if and only if $w$ is $213$-avoiding. Thus, for all $k\geq 2$ and $\ell \geq 0$, we have
$$N_{k,\ell}^{2\underline{13}}=N_{k,\ell}^{213},$$ which is enumerated in Theorem~\ref{thm-from-prev-paper}.
\end{itemize}
\end{thm}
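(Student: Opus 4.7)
The plan is to prove both parts by close imitation of the $2\underline{31}$ analogues (Lemma~\ref{231**} and Theorem~\ref{2-31-is-eq-231}). For part (a) I would argue the contrapositive: if there exist consecutive top elements with $t_j<t_{j+1}$, then the three letters $t_j,b_{j+1},t_{j+1}$ form an occurrence of $2\underline{13}$, since $b_{j+1}$ is adjacent to $t_{j+1}$ by the up-down structure and is strictly smaller than each of its neighbouring tops. This is essentially a one-inequality check and carries no subtlety beyond writing down the correct indices.

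For part (b), the direction ``$213$-avoiding $\Rightarrow$ $2\underline{13}$-avoiding'' is immediate, because any $2\underline{13}$ occurrence is, a fortiori, an occurrence of $213$. For the converse, I would adopt the minimal-witness strategy of Theorem~\ref{2-31-is-eq-231}: suppose $w$ is $2\underline{13}$-avoiding but contains some $213$ occurrence, pick one $w_{j_1}w_{j_2}w_{j_3}$ (so $w_{j_2}<w_{j_1}<w_{j_3}$) with $j_3-j_1$ minimum, and derive a contradiction by case analysis on whether $w_{j_2}$ (the ``$1$'' of the pattern) is a top or a bottom element of the up-down word.

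If $w_{j_2}$ is a top, then part (a) forces $w_{j_2}\geq w_{j_3}$ when $w_{j_3}$ is itself a top, and $w_{j_2}\geq w_{j_3-1}>w_{j_3}$ when $w_{j_3}$ is a bottom; either alternative contradicts $w_{j_2}<w_{j_3}$. If $w_{j_2}$ is a bottom, then the sub-case $j_3=j_2+1$ already exhibits a $2\underline{13}$ occurrence. In the sub-case $j_3\geq j_2+2$ with $w_{j_3}$ a top, part (a) yields $w_{j_2+1}\geq w_{j_3}>w_{j_1}$, so that $w_{j_1}w_{j_2}w_{j_2+1}$ is a $2\underline{13}$ occurrence. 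In the remaining sub-case $j_3\geq j_2+2$ with $w_{j_3}$ a bottom, the top $w_{j_3-1}$ satisfies $w_{j_3-1}>w_{j_3}>w_{j_1}>w_{j_2}$, so $w_{j_1}w_{j_2}w_{j_3-1}$ is a strictly shorter $213$ occurrence, contradicting minimality. The main technical obstacle will be the bookkeeping of parities and adjacencies, namely confirming that $w_{j_2+1}$ and $w_{j_3-1}$ are indeed top elements and that $j_2<j_3-1$ whenever one wishes to invoke part (a) to compare $w_{j_2}$ with $w_{j_3-1}$. Once these details are checked, the identity $N_{k,\ell}^{2\underline{13}}=N_{k,\ell}^{213}$ follows, and the closed form is supplied by Theorem~\ref{thm-from-prev-paper}.
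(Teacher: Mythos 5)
Your proposal is correct and follows exactly the route the paper intends: the paper omits this proof, stating only that it is analogous to the $2\underline{31}$ case (Lemma~\ref{231**} and Theorem~\ref{2-31-is-eq-231}), and your argument is precisely that analogue, with the minimal-witness case analysis and the edge cases (e.g.\ $j_3-1=j_2$ when invoking part (a)) working out as you anticipate. Note only that the word ``increasing'' in statement (a) is a typo in the paper; the displayed inequalities $t_1\geq t_2\geq\cdots$ are what your contrapositive argument correctly establishes.
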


Note that in Theorem~\ref{thm 2.6} (a) we do not have an ``if and only if'' statement, as shown by, e.g., the word 2313.

\subsection{\texorpdfstring{$1\underline{23}$}{Lg}-avoiding or \texorpdfstring{$3\underline{21}$}{Lg}-avoiding up-down words}
A description of $1\underline{23}$-avoiding up-down words is as follows.
\begin{lem}\label{123**}
An up-down word $w=w_1w_2\cdots w_{\ell}$
 is $1\underline{23}$-avoiding if and only if
$$b_1\geq b_2\geq\dots \geq b_{\lfloor\frac{\ell}{2}\rfloor}.$$
\end{lem}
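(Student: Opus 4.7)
The plan is to mimic the proofs of Lemma \ref{132} and Theorem \ref{thm-132-av}(a), splitting into the two directions of the biconditional and using the up-down structure $w=b_1t_1b_2t_2\cdots$ throughout. The key observation driving the proof is that within an up-down word, an adjacent ascent $w_jw_{j+1}$ with $w_j<w_{j+1}$ must have $w_j$ in a bottom position and $w_{j+1}$ in the corresponding top position; thus any occurrence of the pattern $1\underline{23}$ in $w$ has the shape $w_{j_1}b_mt_m$ with $j_1$ lying strictly before position of $b_m$ and with $m\le\lfloor\ell/2\rfloor$ (so that $t_m$ exists).

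For the ``only if'' direction I would argue by contrapositive: suppose there is some $j$ with $1\le j\le\lfloor\ell/2\rfloor-1$ such that $b_j<b_{j+1}$. Since $j+1\le\lfloor\ell/2\rfloor$, the top $t_{j+1}$ exists, and by definition $b_{j+1}<t_{j+1}$. Then the subsequence $b_jb_{j+1}t_{j+1}$ forms an occurrence of $1\underline{23}$, because $b_{j+1}$ and $t_{j+1}$ are adjacent in $w$ and $b_j<b_{j+1}<t_{j+1}$. This is the analogue of the first half of Lemma \ref{132}.

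For the ``if'' direction I would suppose, toward a contradiction, that $b_1\ge b_2\ge\cdots\ge b_{\lfloor\ell/2\rfloor}$ but that $w$ contains an occurrence $w_{j_1}w_{j_2}w_{j_3}$ of $1\underline{23}$, with $j_2=j_3-1$ and $w_{j_1}<w_{j_2}<w_{j_3}$. By the observation above, $w_{j_2}=b_m$ and $w_{j_3}=t_m$ for some $m\le\lfloor\ell/2\rfloor$, and $w_{j_1}<b_m$ with $j_1<j_2$. I would then split on the position-type of $w_{j_1}$:
\begin{itemize}
\item[(i)] If $w_{j_1}$ is a bottom element, then $w_{j_1}=b_{m'}$ with $m'<m\le\lfloor\ell/2\rfloor$ and $b_{m'}<b_m$, contradicting weak decrease.
\item[(ii)] If $w_{j_1}$ is a top element, then $w_{j_1}=t_{m'}$ with $m'<m$, and since $w$ is up-down, $b_{m'+1}<t_{m'}<b_m$. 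Here $m'+1\le m\le\lfloor\ell/2\rfloor$, so $b_{m'+1}<b_m$ again contradicts weak decrease.
\end{itemize}

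The proof is routine and carries no real obstacle; the only care needed is handling the parity of $\ell$, in particular making sure that the cap $\lfloor\ell/2\rfloor$ (rather than $\lceil\ell/2\rceil$) is correct. This is why only bottoms that are followed by a top appear in the inequality: the final bottom of an odd-length up-down word can be arbitrarily small without creating a $1\underline{23}$, since there is no letter after it to play the role of ``3''.
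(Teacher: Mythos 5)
Your proposal is correct and follows essentially the same route as the paper's proof: the forward direction exhibits $b_jb_{j+1}t_{j+1}$ as an occurrence of $1\underline{23}$ when $b_j<b_{j+1}$, and the converse notes that the adjacent ascent forming the ``$\underline{23}$'' part must be a pair $b_mt_m$ and then splits on whether the ``$1$'' is a bottom or a top element, reducing to a violation of the weak decrease of the bottoms. The only (immaterial) difference is that in the top-element case you pass to the bottom $b_{m'+1}$ following $t_{m'}$, whereas the paper uses the bottom preceding it.
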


\begin{proof}
For any $1\underline{23}$-avoiding up-down word $w$, if there exists $1\leq j\leq \lfloor\frac{\ell}{2}\rfloor-1$ such that $b_{j} < b_{j+1}$, then $b_jb_{j+1}t_{j+1}$ is an occurrence of the pattern $1\underline{23}$, which is a contradiction.

Conversely, if there is an occurrence $w_{j^{*}}w_{j}w_{j+1}$ of the pattern $1\underline{23}$  in $w$, where $1\leq j^{*}<j< \ell$, we would have $w_{j^*}< w_{j} < w_{j+1}$. According to the definition of up-down words, $w_{j}$ must be a bottom element, and $w_{j+1}$ must be a top element in $w$. If $w_{j^*}$ is a bottom element, then $w_{j^*}$ is to the left of $w_{j}$ and $w_{j^*}< w_{j}$. If $w_{j^*}$ is a top element, then the bottom element  $w_{j^*-1}\neq w_{j}$ is to the left of $w_{j}$ and $w_{j^*-1}< w_{j}$.

This completes the proof.
\end{proof}

We can now obtain the following enumerative result.

\begin{thm}\label{thm 2.8}
The following two statements hold, where $N_{k,\ell}^{\underline{132}}$ is enumerated in Section~\ref{sub-cons-132}:
\begin{itemize}
\item[(a)]For all $k\geq 2$ and $i \geq 0$, we have
$$S_{k,2i}^{1\underline{23}}=S_{k,2i}^{\underline{132}}.$$
\item[(b)]For all $k\geq 2$ and $i \geq 1$, we have
$$N_{k,2i+1}^{1\underline{23}}=N_{k,2i+1}^{\underline{132}}+\sum_{j=1}^{k-1}\binom{k-j}{2} N_{k-j+1,2i-2}^{\underline{132}}.$$
\end{itemize}
\end{thm}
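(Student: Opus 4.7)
The plan is to leverage the structural characterizations provided by Lemmas~\ref{132} and~\ref{123**}. Observe that these two lemmas differ only in the range of the weakly decreasing chain of bottom elements: Lemma~\ref{132} requires $b_1 \ge b_2 \ge \cdots \ge b_{\lceil \ell/2 \rceil}$, while Lemma~\ref{123**} only requires $b_1 \ge b_2 \ge \cdots \ge b_{\lfloor \ell/2 \rfloor}$. For even $\ell$ these indices agree, whereas for odd $\ell = 2i+1$ the $1\underline{23}$-condition leaves the final bottom $b_{i+1}$ unconstrained (apart from the up-down requirement $b_{i+1} < t_i$).

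For part (a), since $\lceil 2i/2 \rceil = \lfloor 2i/2 \rfloor = i$, the defining conditions in Lemmas~\ref{132} and~\ref{123**} become literally identical, immediately yielding $S_{k,2i}^{1\underline{23}} = S_{k,2i}^{\underline{132}}$ as sets.

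For part (b), I would partition $S_{k,2i+1}^{1\underline{23}}$ into two classes. Words with $b_{i+1} \le b_i$ additionally satisfy the $\underline{132}$-condition and thus contribute $N_{k,2i+1}^{\underline{132}}$. The remaining words have $b_i < b_{i+1} < t_i$, and I would count these by stratifying on $j := b_i$. For fixed $j$, the pair $(t_i, b_{i+1})$ ranges over $\{(t,b) : j < b < t \le k\}$, contributing $\binom{k-j}{2}$ choices. The prefix $b_1 t_1 \cdots b_{i-1} t_{i-1} b_i$ is an arbitrary $\underline{132}$-avoiding up-down word of length $2i-1$ over $[k]$ ending in $j$. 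Because all letters of such a prefix lie in $\{j, j+1, \ldots, k\}$ (every bottom is at least $b_i = j$, and each top exceeds its neighboring bottoms), the shift $x \mapsto x - (j-1)$ bijects these prefixes with $\underline{132}$-avoiding up-down words of length $2i-1$ over $[k-j+1]$ ending in $1$; the append/remove-$1$ bijection used in the proof of Theorem~\ref{thm 1.6} then reduces this to $N_{k-j+1, 2i-2}^{\underline{132}}$. Summing over $j \in \{1, \ldots, k-2\}$ and harmlessly extending the range to $j = k-1$ (since $\binom{1}{2} = 0$) yields the stated formula.

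The main care needed lies in verifying the degenerate case $i = 1$, where the prefix collapses to the single letter $b_1 = j$ and is correctly matched by $N_{k-j+1,0}^{\underline{132}} = 1$, and in confirming that every word produced with $b_{i+1} > b_i$ is genuinely $1\underline{23}$-avoiding. The latter follows from Lemma~\ref{123**} as soon as one observes that the required chain $b_1 \ge \cdots \ge b_i$ (i.e., through the $\lfloor (2i+1)/2 \rfloor = i$-th bottom) is already guaranteed by the $\underline{132}$-avoidance of the prefix, while the free bottom $b_{i+1}$ has no subsequent top to complete a forbidden pattern.
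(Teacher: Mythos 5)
Your proposal is correct and follows essentially the same route as the paper: part (a) is the observation that Lemmas~\ref{132} and~\ref{123**} impose the identical condition $b_1\geq\cdots\geq b_i$ when $\ell=2i$, and part (b) uses the same case split on whether $b_{i+1}\leq b_i$, with the second case stratified by $j=b_i$ and the suffix pair $(t_i,b_{i+1})$ contributing $\binom{k-j}{2}$. The only cosmetic difference is that you keep $b_i$ in the prefix and invoke the remove-the-final-$1$ bijection from the proof of Theorem~\ref{thm 1.6}, whereas the paper writes $w=w'jw''$ and counts the length-$(2i-2)$ word $w'$ directly; both yield $N_{k-j+1,2i-2}^{\underline{132}}$.
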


\begin{proof}
(a) follows immediately from Lemmas~\ref{132} and \ref{123**}.

For (b), there are two cases to consider:
\begin{itemize}
\item $b_i\geq b_{i+1}$. These words are counted by $N_{k,2i+1}^{\underline{132}}$.
\item $b_i<b_{i+1}$. Then, $b_i$ is the minimum element in $w$. Suppose that $b_i=j$, where $1\leq j\leq k-1$. Then the word $w$ must be of the form $w' j w''$, where $w'$ is a $1\underline{23}$-avoiding up-down word of length $2i-2$ over $\{j, j+1, \dots, k\}$, and $w''$ is a down-up word of length $2$ over $\{j+1, \dots, k \}$. Thus, the words in question are counted by $\sum_{j=1}^{k-1}\binom{k-j}{2} N_{k-j+1,2i-2}^{\underline{132}}$.
\end{itemize}
This completes the proof.
\end{proof}

The case of enumeration of $3\underline{21}$-avoiding up-down words is similar to that of $1\underline{23}$-avoiding up-down words conducted above. Thus, we omit our proof of the following theorem.

\begin{thm}\label{thm 2.9}
The following three statements hold, where $N^{\underline{312}}_{k,\ell}$ is enumerated in Section~\ref{sec-3-2}:
\begin{itemize}
\item[(a)] An up-down word $w$
 is $3\underline{21}$-avoiding if and only if
$$t_1\leq t_2\leq\dots \leq t_{\lfloor\frac{\ell-1}{2}\rfloor}.$$
\item[(b)]For all $k\geq 2$ and $i \geq 0$, we have
$$N_{k,2i+1}^{3\underline{21}}=N_{k,2i}^{\underline{312}}.$$
\item[(c)]For all $k\geq 2$ and $i \geq 2$, we have
$$N_{k,2i}^{3\underline{21}}=N_{k,2i}^{\underline{312}}+
\sum_{j=2}^{k}\binom{j-1}{2} \left( N_{j,2i-3}^{\underline{312}}- \delta_{i,2}\right).$$
\end{itemize}
\end{thm}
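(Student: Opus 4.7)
The plan is to prove parts (a), (b), (c) of Theorem~\ref{thm 2.9} in order, modeling the arguments on Lemma~\ref{312} and Theorem~\ref{thm 2.8}. For (a), I would characterize $3\underline{21}$-avoidance by directly analyzing what such an occurrence looks like inside an up-down word. Any occurrence $w_{j^*}w_j w_{j+1}$ has $w_j>w_{j+1}$, so $w_j=t_m$ and $w_{j+1}=b_{m+1}$ for some $m$; the existence of $b_{m+1}$ is exactly the index bound $m\le \lfloor(\ell-1)/2\rfloor$. The leading entry satisfies $w_{j^*}>t_m$: if $w_{j^*}=t_{m'}$ with $m'<m$ then $t_{m'}>t_m$ breaks weak increase, and if $w_{j^*}=b_{m'}$ with $m'\le m$ then $b_{m'}>t_m\ge t_{m'}$ contradicts $b_{m'}<t_{m'}$. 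Conversely, any inversion $t_{m'}>t_m$ at an admissible index produces the explicit occurrence $t_{m'}\,t_m\,b_{m+1}$ of $3\underline{21}$, which closes both directions.

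For (b), I specialize (a) to $\ell=2i+1$: the avoidance condition becomes $t_1\le\cdots\le t_i$, since $\lfloor 2i/2\rfloor=i$. By Lemma~\ref{312}, $\underline{312}$-avoidance on up-down words of the same odd length also reduces to $t_1\le\cdots\le t_i$ (because $\lfloor(2i+1)/2\rfloor=i$). Hence the two avoidance classes coincide as sets, so their cardinalities are equal and the identity follows at once.

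For (c), I specialize (a) to $\ell=2i$: the $3\underline{21}$-avoiding condition is $t_1\le\cdots\le t_{i-1}$ with no constraint on $t_i$, whereas $\underline{312}$-avoidance further demands $t_{i-1}\le t_i$. Partition the $3\underline{21}$-avoiding words of length $2i$ by whether $t_{i-1}\le t_i$. The first class is precisely $S^{\underline{312}}_{k,2i}$ and contributes $N^{\underline{312}}_{k,2i}$. For the second class ($t_{i-1}>t_i$), decompose $w=w'\,b_i t_i$, where $w'=b_1 t_1\cdots b_{i-1}t_{i-1}$ is a $\underline{312}$-avoiding up-down word of length $2(i-1)$; setting $j=t_{i-1}$, the pair $(b_i,t_i)$ must satisfy $1\le b_i<t_i<j$, giving $\binom{j-1}{2}$ choices independent of $w'$. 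Grouping by $j$, the second class contributes $\sum_{j=2}^{k}\binom{j-1}{2}\,P_{j,i}$, where $P_{j,i}$ is the number of $\underline{312}$-avoiding up-down words of length $2(i-1)$ whose terminal top equals $j$. Following the argument in the proof of Theorem~\ref{thm 1.11}, deleting that terminal top gives $P_{j,i}=N^{\underline{312}}_{j,2i-3}$ for $i\ge 3$; for $i=2$ the prefix has length two and a direct count of pairs $b_1<t_1=j$ yields $P_{j,2}=j-1$, which differs by exactly one from the tabulated value $N^{\underline{312}}_{j,1}=j$, and this discrepancy is precisely what $-\delta_{i,2}$ absorbs. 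The only delicate step is this length-one boundary case and choosing the correct convention to match it; everything else is routine bookkeeping.
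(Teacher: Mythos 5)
Your proof is essentially the one the paper intends: the authors omit the proof of Theorem~\ref{thm 2.9}, saying it is similar to that of Theorem~\ref{thm 2.8}, and your argument is precisely that parallel. Part (a) is correct (the adjacent descent forces $w_j=t_m$, $w_{j+1}=b_{m+1}$, the existence of $b_{m+1}$ gives the index bound $m\le\lfloor(\ell-1)/2\rfloor$, and the case analysis on whether the leading letter is a top or a bottom element is sound). Part (c) is also correct and complete: the split according to whether $t_{i-1}\le t_i$, the $\binom{j-1}{2}$ count of tails $b_it_i$ with $b_i<t_i<j=t_{i-1}$, the reduction of the prefix count to $N^{\underline{312}}_{j,2i-3}$ via the argument of Theorem~\ref{thm 1.11}, and the $-\delta_{i,2}$ boundary adjustment all check out (e.g.\ the formula gives $8$ for $k=3$, $i=2$, matching a direct count).

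The one point you should not have glossed over is part (b). Your argument shows that for odd length the two avoidance conditions coincide as sets, hence $N^{3\underline{21}}_{k,2i+1}=N^{\underline{312}}_{k,2i+1}$; but the theorem as printed asserts $N^{3\underline{21}}_{k,2i+1}=N^{\underline{312}}_{k,2i}$, which is a different and in fact false identity: every up-down word of length $3$ avoids $3\underline{21}$, so $N^{3\underline{21}}_{3,3}=5=N^{\underline{312}}_{3,3}$, whereas $N^{\underline{312}}_{3,2}=3$ (and for $i=0$ the printed identity would read $k=1$). Table~\ref{overview-enum-results}, which places odd-length $3\underline{21}$ in the same class $D$ as odd-length $\underline{312}$, confirms that the subscript $2i$ is a typo for $2i+1$, so you have proved the correct statement. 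Still, asserting that ``the identity follows at once'' from an argument whose conclusion visibly differs from the printed right-hand side is a gap in the write-up: you should have flagged the discrepancy and stated which version you are actually proving.
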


\subsection{The remaining cases}
The remaining enumeration cases for vincular pattern-avoiding up-down words are obtained by applying the reverse and complement operations to our obtained results. We record these cases in the following two theorems.

\begin{thm}
For all $k\geq 2$ and $i \geq 0$, we have
$$N_{k,2i}^{\underline{12}3}=N_{k,2i}^{1\underline{23}},~~
N_{k,2i}^{\underline{21}3}=N_{k,2i}^{1\underline{32}},~~
N_{k,2i}^{\underline{13}2}=N_{k,2i}^{2\underline{13}}$$
and
$$N_{k,2i}^{\underline{31}2}=N_{k,2i}^{2\underline{31}},~~
N_{k,2i}^{\underline{23}1}=N_{k,2i}^{3\underline{12}},~~
N_{k,2i}^{\underline{32}1}=N_{k,2i}^{3\underline{21}}.$$
\end{thm}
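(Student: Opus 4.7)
The plan is to deduce all six identities in one go by invoking the reverse--complement involution on even-length up-down words, a technique already used in the paper (compare, for instance, the proof that $N_{k,2i}^{\underline{213}}=N_{k,2i}^{\underline{132}}$ at the end of Section~\ref{three-consecutive-up-down-sec}). First I would recall why the map $w\mapsto (w^c)^r=(w^r)^c$ is a bijection from the set of up-down words of length $2i$ over $[k]$ to itself: for even length, reversing an up-down word produces a down-up word (the relations $<$ and $>$ get swapped along the reversed sequence), while complementing a down-up word yields an up-down word. Hence reverse--complement restricts to an involution on $S^{\text{up-down}}_{k,2i}$.

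Next I would track how this involution acts on vincular patterns. Reversal moves the adjacency marker from the leftmost pair of positions to the rightmost pair of positions (so a pattern of shape $\underline{xy}z$ becomes one of shape $x'\underline{y'z'}$), and complementation sends each entry $a$ of a length-$3$ pattern to $4-a$ without changing the adjacency marker. A direct computation then gives
\begin{align*}
\underline{12}3 &\longmapsto 1\underline{23}, & \underline{21}3 &\longmapsto 1\underline{32}, & \underline{13}2 &\longmapsto 2\underline{13}, \\
\underline{31}2 &\longmapsto 2\underline{31}, & \underline{23}1 &\longmapsto 3\underline{12}, & \underline{32}1 &\longmapsto 3\underline{21},
\end{align*}
which matches the six pairs listed in the theorem exactly.

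Finally, since the reverse--complement map sends occurrences of a pattern $p$ in $w$ bijectively to occurrences of $(p^c)^r$ in $(w^c)^r$, it restricts further to a bijection between $\underline{xy}z$-avoiding and $x'\underline{y'z'}$-avoiding up-down words of length $2i$ over $[k]$, yielding all six equalities simultaneously. The only real obstacle is bookkeeping: one must verify carefully that the adjacency marker indeed migrates from positions $1$--$2$ to positions $2$--$3$ under reversal, and one must note that the restriction to \emph{even} length is essential, because on odd-length up-down words the reverse--complement map produces down-up words and the argument above would have to be adjusted accordingly.
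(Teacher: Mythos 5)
Your proposal is correct and matches the paper's intent exactly: the paper likewise obtains these six identities by applying the reverse and complement operations (it states this without spelling out the details you supply). Your explicit verification that $w\mapsto (w^r)^c$ preserves even-length up-down words and carries each pattern $\underline{xy}z$ to the corresponding $x'\underline{y'z'}$ listed in the theorem is accurate.
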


\begin{thm}
For all $k\geq 2$ and $i \geq 0$, we have
$$N_{k,2i+1}^{\underline{12}3}=N_{k,2i+1}^{3\underline{21}},~~
N_{k,2i+1}^{\underline{21}3}=N_{k,2i+1}^{3\underline{12}},~~
N_{k,2i+1}^{\underline{13}2}=N_{k,2i+1}^{2\underline{31}}$$
and
$$N_{k,2i+1}^{\underline{31}2}=N_{k,2i+1}^{2\underline{13}},~~
N_{k,2i+1}^{\underline{23}1}=N_{k,2i+1}^{1\underline{32}},~~
N_{k,2i+1}^{\underline{32}1}=N_{k,2i+1}^{1\underline{23}}.$$
\end{thm}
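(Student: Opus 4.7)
The plan is to prove all six identities at once by exhibiting a single bijection: the reverse operation $w \mapsto w^r$. The crucial parity observation is that, unlike in the even-length case (where reverse sends up-down to down-up and one must compose with complement), for odd length the reverse operation sends up-down words to up-down words. Indeed, if $w = w_1 w_2 \cdots w_{2i+1}$ is up-down, then the sequence of comparisons $<,>,<,>,\ldots,>$ between consecutive entries has length $2i$ and is a palindrome, so when we read $w^r$ from left to right we recover exactly the same pattern of comparisons. Thus reverse restricts to a bijection on $S^{p}_{k,2i+1}$ as a set of up-down words, before considering any pattern constraint.

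Next I would verify how vincular patterns of length $3$ behave under reverse. An occurrence of a pattern $\tau_1\tau_2\tau_3$ at positions $j_1<j_2<j_3$ in $w$ becomes an occurrence at positions $2i+2-j_3<2i+2-j_2<2i+2-j_1$ in $w^r$, with values order-isomorphic to $\tau_3\tau_2\tau_1$. Moreover, adjacency of two positions in $w$ is preserved between the corresponding (mirrored) positions in $w^r$. Consequently, the pattern $\underline{ab}c$ transforms into $c\underline{ba}$, and the pattern $a\underline{bc}$ transforms into $\underline{cb}a$. Applying this, one reads off exactly the six pairings listed in the theorem:
\begin{align*}
\underline{12}3 &\leftrightarrow 3\underline{21}, &
\underline{21}3 &\leftrightarrow 3\underline{12}, &
\underline{13}2 &\leftrightarrow 2\underline{31}, \\
\underline{31}2 &\leftrightarrow 2\underline{13}, &
\underline{23}1 &\leftrightarrow 1\underline{32}, &
\underline{32}1 &\leftrightarrow 1\underline{23}.
\end{align*}
Hence for each such pair $(p,p^r)$, the reverse map is a bijection from $S^{p}_{k,2i+1}$ to $S^{p^r}_{k,2i+1}$, which yields the six stated equalities.

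There is no real obstacle here; the only point to be careful about is the parity distinction from the previous theorem. In the even-length case the relations $<,>,<,\ldots,>$ have odd length and flip under reversal, forcing the use of complement together with reverse. In the odd-length case the reverse alone does the job, which is why the pattern pairings in this theorem differ from those in the preceding one (there they came from complement-then-reverse, here from reverse alone). Modulo this observation, the proof amounts to the two short verifications above.
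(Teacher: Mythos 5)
Your proof is correct and follows essentially the same route as the paper, which disposes of these cases by noting that for odd length the reverse operation alone maps up-down words to up-down words and sends each pattern $\underline{ab}c$ to $c\underline{ba}$. (One tiny wording quibble: the comparison sequence $<,>,\ldots,<,>$ is not literally a palindrome; rather, reversing the word both reverses and flips each comparison, and these two effects cancel for even-length comparison sequences — your conclusion is nonetheless right.)
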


\section{Concluding remarks}

In this paper, we not only enumerated all cases of length 3 vincular pattern-avoidance on alternating words providing a link, e.g., to the Stirling numbers of the second kind, but also discussed the structure of 123-avoiding up-down words of even length. As the result, we provided an alternative, combinatorial proof of the fact that these words are counted by the Narayana numbers. However, our combinatorial proof uses a bijection between Dyck paths and certain equivalence classes on words in question, along with a known relation on Narayana numbers. It is still desirable to solve the following problem.

\begin{prob} Provide a direct combinatorial proof of the fact that $123$-avoiding up-down words of even length are counted by the Narayana numbers, namely, find a bijection sending these words to Dyck paths. \end{prob}

Also, it would be interesting to describe the structure of 132-avoiding up-down words of even length, e.g., via the notion of a cut-pair introduced in this paper, and possibly provide an alternative proof of the fact that these words are counted by the Narayana numbers, as was shown in \cite{GKZ}. We leave this as an open research direction.

Finally, there are many other types of patterns studied in the literature (see Chapter~1 in \cite{Kitaev2011Patterns}) and one could study occurrences of these patterns on alternating words, which should bring more links to known combinatorial structures.

\section*{\bf Acknowledgments}
The authors are grateful to an anonymous referee for reading carefully the paper and providing many useful suggestions that improved the presentation. The work of the first and the third authors was supported by the 973 Project, the PCSIRT Project of the Ministry of Education and the National Science Foundation of China. The second author is grateful to the administration of the Center for Combinatorics at Nankai University for their hospitality during the author's stay in June--July 2015.

%

\end{document}